\documentclass[pdflatex,11pt]{article}
\usepackage[T1]{fontenc}
\usepackage[applemac]{inputenc}
\usepackage[english]{babel}
\usepackage{geometry}
\usepackage{amsfonts,amsmath,amssymb,amsthm}
\usepackage{lmodern}
\usepackage{graphicx}
\usepackage{enumerate}
\usepackage{hyperref}

\makeatletter
\newtheorem*{rep@theorem}{\rep@title}
\newcommand{\newreptheorem}[2]{%
\newenvironment{rep#1}[1]{%
 \def\rep@title{#2 \ref{##1}}%
 \begin{rep@theorem}}%
 {\end{rep@theorem}}}
\makeatother

\newtheorem{theo}{Theorem}[section]
\newtheorem*{theo*}{Theorem}
\newreptheorem{theo}{Theorem}
\newtheorem{prop}[theo]{Proposition}
\newtheorem{lem}[theo]{Lemma}
\newtheorem{cor}[theo]{Corollary}
\newtheorem*{conj}{Conjecture}

\theoremstyle{definition}
\newtheorem{defi}[theo]{Definition}

\newtheorem{notation}[theo]{Notation}
\newtheorem*{ack}{Acknowledgments}

\theoremstyle{remark}
\newtheorem{rem}[theo]{Remark}
\newtheorem*{rem*}{Remark}

\newcommand\N{\mathbb N}
\newcommand\C{\mathbb C}
\newcommand\B{\mathcal B}

\newcommand\graphe{G_{lw}}

\newcommand\graphek{\graphe^{(k)}}
\newcommand\graphekw{\tilde G_{lw}^{(k)}}
\newcommand\g{\lambda}
\newcommand\gc[1]{\mu_{(#1)}}
\newcommand\gw{\gc{w}}

\newcommand\spa{\gamma_1}
\newcommand\spb{\gamma_2}
\newcommand\gp{\gc{\spa}} % Bert : c'Ètait s_1 s_2
\newcommand\gpp{\gc{\spb}} % Bert : c'Ètait s_3 s_4 s_5 s_6
\newcommand\nb{N}
\newcommand\nbw{\nb^{(w)}}
\newcommand\nbb{N_\circ}
\newcommand\nbbw{\nbb^{(w)}}
\DeclareMathOperator\tr{tr}
\newcommand\disque[1]{D_{#1}}
\DeclareMathOperator\mcg{Mod}

\newcommand\init[1]{S(#1)}
\newcommand\final[1]{F(#1)}

\newcommand\card[1]{\#(#1)}

\author{Sandrine Caruso}
\title{On the genericity of pseudo-Anosov braids I: rigid braids}
\date{}

\begin{document}

\maketitle

%\tableofcontents

\begin{abstract}
We prove that, in the $l$-ball of the Cayley graph of the braid group with $n \geqslant 3$ strands, the proportion of rigid pseudo-Anosov braids is bounded below independently of $l$ by a positive value. %More precisely, we prove that this is the case of the proportion of \emph{rigid} pseudo-Anosov braids.
\end{abstract}

\section{Introduction}

A natural question concerning the Nielsen-Thurston classification of braids is: what is the most likely Nielsen-Thurston type of a ``long random braid''? Different interpretations can be given to this question, but in this paper we shall use the following setting. We consider the Cayley graph of the braid group~$B_n$ (for a fixed number of strands $n$), with generators the set of simple braids -- this is the standard generating set when the braid group is studied as a Garside group. A well-known conjecture since the work of Thurston is as follows.
\begin{conj}
The proportion of pseudo-Anosov braids among all elements in the ball of radius~$l$ in the Cayley graph converges to $1$ as $l$ tends to infinity.
\end{conj}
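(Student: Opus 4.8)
\medskip
\noindent\textbf{Proof proposal.}
The plan is to prove the equivalent statement that the proportion of \emph{non}-pseudo-Anosov elements in the $l$-ball tends to $0$; by the Nielsen--Thurston trichotomy these are exactly the periodic and the reducible braids. Write $\mathcal B_l$ for the $l$-ball. Since the left-greedy normal forms of $B_n$ form a regular language, $|\mathcal B_l|$ grows like $\rho^{\,l}$ for a Perron growth rate $\rho>1$ equal to the spectral radius of the transfer matrix of the governing finite automaton. The entire strategy is then to bound the exponential growth rate of each bad family strictly below $\rho$, so that its proportion decays exponentially to $0$.

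The periodic braids are the easier case and not the real obstacle. By the classical Eilenberg--Ker\'ekj\'art\'o description, every periodic element is conjugate to a power of $\delta=\sigma_1\cdots\sigma_{n-1}$ or of $\sigma_1\delta$; within $\mathcal B_l$ the exponent is forced to be $O(l)$, and I would treat this family in the same growth-rate framework as the reducible one, the point being only that its rigidity is total (its rigid representative is literally a power of $\delta$ or of $\sigma_1\delta$), so the corresponding sub-language is extremely sparse.

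The heart of the problem is the reducible braids. Such a braid preserves an essential multicurve, its canonical reduction system, and is consequently conjugate to a braid in standard tube form: an outer braid permuting $k\geqslant 2$ disjoint tubes, decorated by inner braids threaded through them. These standard forms lie in a proper subgroup assembled from braid groups on fewer strands, whose normal forms constitute a sub-language of strictly smaller growth rate $\rho'<\rho$. Crucially, by the results of Bernadete--Guti\'errez--Nitecki and Gonz\'alez-Meneses--Wiest the super summit set of a reducible braid---in particular the rigid representatives analysed in this paper---consists of elements whose normal form respects such a tube partition, so that reducibility becomes \emph{visible} on the canonical side. The positive-proportion theorem of the present paper would enter here as the tool isolating, inside the regular language, the sub-language $L^{\mathrm{pA}}$ of rigid tube-free normal forms that are provably pseudo-Anosov; a transfer-matrix comparison of $L^{\mathrm{pA}}$ against the tube-respecting sub-language would then yield the exponential decay of the reducible proportion.

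The hard part---and the reason the statement remains conjectural---is the clash between the conjugacy-invariance of reducibility and the \emph{non}-invariance of both the word metric and the normal form. A generic reducible element of $\mathcal B_l$ is not itself in standard tube form, and its normal form need exhibit no invariant curve whatsoever; one learns that it is reducible only after conjugating it into its super summit set, and the best available bounds on the length of that conjugator are polynomial rather than linear. Consequently a count over $\mathcal B_l$ translates only into a count of tube-respecting forms in a ball of radius $l+\mathrm{poly}(l)$, and the blow-up factor $\rho^{\,\mathrm{poly}(l)}$ overwhelms the growth-rate gap $\rho'<\rho$ that the argument was meant to exploit. Closing this gap would require either a linear bound on the conjugator carrying an element to its super summit set, or, more realistically, a combinatorial criterion detecting reducibility directly from an \emph{arbitrary} normal form rather than from a super summit representative, so that the reducible elements genuinely form a regular sub-language to which the transfer-matrix comparison applies. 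It is precisely at this point that the rigidity control developed in this paper would need to be pushed well beyond the positive-proportion estimate.
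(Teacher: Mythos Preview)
The statement you are addressing is labelled a \emph{Conjecture} in the paper, and the paper does \emph{not} prove it: the introduction explicitly defers the full proof to the companion article~\cite{CW}. What the present paper establishes is the strictly weaker Corollary~\ref{cor:boule}, namely that the proportion of \emph{rigid} pseudo-Anosov braids in the $l$-ball is bounded below by a positive constant, not that the overall pseudo-Anosov proportion tends to~$1$. So there is no ``paper's own proof'' of this statement to compare against.

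That said, your sketch and the paper's mechanism for the rigid case diverge in an instructive way. You propose to control reducible braids by conjugating them into standard tube form and then comparing growth rates of a ``tube-respecting'' sub-language, and you correctly note that only polynomial conjugator bounds are available, which destroys the comparison. The paper sidesteps conjugation entirely for rigid braids: by Proposition~\ref{prop:propred} (resting on Theorem~\ref{theo:GMW} and Proposition~\ref{prop:BGN}), a rigid non-pseudo-Anosov braid must \emph{already}, in its own normal form, avoid one of two explicit finite subwords~$\gamma_1,\gamma_2$. Lemma~\ref{lem:graphe}(iii) then gives the strict drop in growth rate directly. This is a forbidden-subword argument on the given normal form, not a tube-decomposition argument on a conjugate, and that is exactly why it escapes the obstacle you highlight.

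Finally, your last paragraph asserts that the conjecture ``remains conjectural''. The paper claims otherwise, pointing to~\cite{CW}; the title of that sequel (``conjugations to rigid braids'') indicates that the missing step is handled by showing a generic braid admits a short conjugator to a rigid one --- precisely the linear-control route you raise and then dismiss as unavailable.
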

The best known results going into this direction were, to the best of our knowledge, the classical paper of Fathi~\cite{Fathi}, and the article of Atalan and Korkmaz~\cite{AK} which deals with the case of three-strand braids. The present paper, 
together with the article in preparation \cite{CW}, contains a proof of the above conjecture. In this first part, we introduce some essential tools needed for the proof in \cite{CW}, and already prove a result of independent interest concerning the proportion of 
\emph{rigid} pseudo-Anosov braids (see Corollary~\ref{cor:boule}):
\begin{theo*}
For sufficiently large $l$, the proportion of rigid pseudo-Anosov braids in the ball of radius $l$ in the Cayley graph of~$B_n$ is bounded below by a strictly positive constant which does not depend on~$l$ (but might depend on~$n$).
\end{theo*}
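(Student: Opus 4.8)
The plan is to estimate three quantities attached to the ball of radius $l$: its cardinality $N_l$, the number $R_l$ of rigid braids it contains, and the number $R_l'$ of those rigid braids that fail to be pseudo-Anosov. I aim to show $N_l\asymp R_l\asymp l\mu^l$, where $\mu>1$ is the growth rate of $B_n$ for the simple-element generating set, and $R_l'=o(l\mu^l)$; then $R_l-R_l'\geq(\text{const})\cdot N_l$ is the theorem.

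\textbf{Counting and the ball.} I would encode a left normal form $\Delta^p x_1\cdots x_r$ (the $x_i$ proper simple, consecutive pairs left-weighted) as a walk in the finite directed graph on proper simple braids whose edges are the left-weighted pairs, with adjacency matrix $M$. This graph has self-loops (e.g.\ $(\sigma_1,\sigma_1)$ is left-weighted) and a unique dominant strongly connected component, whose Perron eigenvalue is $\mu$; hence the number of positive braids with normal form of length $r$ is $\asymp\mu^r$, and $\tr(M^r)\asymp\mu^r$. The latter counts the cyclic left-weighted $r$-tuples, i.e.\ the rigid positive braids of canonical length $r$; since conjugation by $\Delta$ is an automorphism of the graph, the rigid braids of canonical length $r$ with infimum of any fixed parity are counted by the trace of a twisted power of $M$, again $\asymp\mu^r$. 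Next, with the symmetrised generating set, a braid $x$ with $\inf(x)\leq 0\leq\sup(x)$ has word length exactly $\sup(x)-\inf(x)$ --- the inequality $\|x\|\geq\sup(x)-\inf(x)$ being subadditivity of canonical length, the reverse coming from moving factors $\Delta^{-1}$ past the $x_i$ one at a time. So such a braid of canonical length $r\leq l$ is described by one of roughly $r$ admissible powers of $\Delta$ together with a left-weighted $r$-tuple; these dominate the braids with $\inf\geq 0$ or $\sup\leq 0$ (only $\asymp\mu^l$ of them), giving $N_l\asymp\sum_{r\leq l}r\mu^r\asymp l\mu^l$, and the identical computation with $\tr(M^r)$ replacing the total $r$-tuple count gives $R_l\asymp l\mu^l$. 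Thus rigid braids already make up a fixed positive proportion $\rho_0$ of the ball.

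\textbf{Throwing away periodic and reducible rigid braids.} Periodic braids lie in finitely many conjugacy classes, each exponentially sparse in the Cayley graph (its trace on the ball of radius $l$ grows at rate at most $\sqrt\mu$, up to polynomial factors), so they contribute $o(l\mu^l)$. The reducible case is the core. Here I would invoke (and this is the part that needs real work, or a precise reference) the structure theory of rigid braids: a reducible rigid braid of canonical length $r$ is conjugate, by an element of its ultra summit set --- a cyclic rotation of its normal form, possibly twisted by $\Delta$, so only $O(r)$ choices --- to a braid that preserves a \emph{round} curve system and whose tubular pieces are themselves rigid; in particular it lies in a parabolic submonoid attached to a round curve system $\mathcal C$. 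There being only finitely many round curve systems in the $n$-punctured disc, and the simple braids preserving a given $\mathcal C$ forming a proper subset of the proper simple braids (omitting $\Delta$, and e.g.\ $\sigma_1\cdots\sigma_{n-1}$) that does not contain the dominant component of $M$, each such parabolic submonoid is counted by a sub-automaton with Perron eigenvalue $\nu_{\mathcal C}<\mu$; with $\nu=\max_{\mathcal C}\nu_{\mathcal C}<\mu$, the reducible rigid braids of canonical length $r$ number at most $\operatorname{poly}(r)\cdot\nu^r$, and summing over $r\leq l$ (with the factor $r$ for the power of $\Delta$) bounds $R_l'$ by $\operatorname{poly}(l)\cdot\nu^l=o(l\mu^l)$. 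Hence $R_l-R_l'\geq(\rho_0-o(1))N_l$.

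\textbf{Where the real obstacle is.} Everything outside the third paragraph is bookkeeping around Perron--Frobenius; the genuine difficulty --- and the tool the paper must build --- is the structural statement that a rigid reducible braid has, within its ultra summit set, a representative preserving a round curve system, with conjugating data bounded polynomially in the canonical length. This is where rigidity must be fed into the Nielsen--Thurston/Garside analysis of curves invariant under a braid in summit form, and one must in particular treat the case where the canonical reduction system of the braid itself is not round and only becomes round after rotating the normal form. A subsidiary point, needed uniformly over the finitely many round systems, is the strict comparison $\nu_{\mathcal C}<\mu$ of growth rates, which reduces to identifying the dominant component of $M$ and checking it is not contained in any of the parabolic sub-automata.
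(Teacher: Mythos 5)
Your first two paragraphs (the Perron--Frobenius bookkeeping on the left-weighted automaton, the count $N_l\asymp R_l\asymp l\mu^l$ via traces and via the Charney--Meier description of geodesic length) match the paper's argument in substance, and you are right that the whole difficulty sits in your third paragraph. But the structural statement you propose there is a genuine gap, and not merely a missing reference: it is not true that a reducible rigid braid is conjugate, by a cyclic rotation of its normal form, to one preserving a \emph{round} curve system, so the reducible rigid braids cannot be swept into finitely many parabolic sub-automata indexed by round systems. The available theorem (Gonz\'alez-Meneses--Wiest, quoted here as Theorem~\ref{theo:GMW}) is weaker and trichotomous: for a non-periodic reducible rigid $\beta$, either some power $\beta^k$ preserves a round curve, or one of the positive braids $\Delta^{-\inf(\beta^k)}\beta^k$, $\beta^{-k}\Delta^{\sup(\beta^k)}$ preserves an \emph{almost round} curve whose interior strands do not cross. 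The almost-round alternatives do not correspond to walks in a sub-automaton of simple braids preserving a fixed curve system, so the eigenvalue comparison $\nu_{\mathcal C}<\mu$ you want has nothing to attach to in those cases. (Your treatment of periodic braids is also heavier than necessary and rests on an unproved sparsity claim; in fact the only rigid periodic braids are powers of $\Delta$, which is immediate from rigidity of the normal form of $\beta^k$.)

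The paper resolves the obstacle by a different, softer device that avoids any structure theory of reduction curves: it exhibits two explicit words in normal form, $\gamma_1$ of length $2$ and $\gamma_2$ of length $4$, and proves that a rigid braid whose normal form contains \emph{both} as subwords is pseudo-Anosov (Proposition~\ref{prop:propred}). Containing $\gamma_1$ forbids any round curve from being sent to a round curve, by the Bernadete--Gutierrez--Nitecki persistence of roundness along prefixes of the normal form (Proposition~\ref{prop:BGN}), which kills case (1) after observing that rigidity makes the normal form of $\beta^k$ the concatenation of twisted copies of that of $\beta$. Containing $\gamma_2$ forces every pair of strands to cross in some factor and to fail to cross in some other factor, which kills the two almost-round cases (one needs two strands that never cross, the other two strands that cross in every factor). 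The count is then finished by your own machinery: the number of length-$l$ normal forms avoiding a fixed subword $w$ is $O(\mu_{(w)}^l)$ with $\mu_{(w)}<\lambda$, proved by passing to the $k$-th power graph and deleting the edge corresponding to $w$, the strict inequality coming from a Perron--Frobenius comparison of the two adjacency matrices. If you want to salvage your outline, this forbidden-subword lemma is the piece to substitute for your parabolic sub-automata, together with a proof of Proposition~\ref{prop:propred}; without it the reducible case remains open.
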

The proof is in two steps: we shall see that the proportion of so-called \emph{rigid} braids is bounded below independently of~$l$, and among rigid braids the proportion of pseudo-Anosov elements converges to~$1$.

Another possible interpretation of the original question should be pointed out: the work of Maher~\cite{maher} and Sisto \cite{sisto} deals with braids obtained by a long random walk in the Cayley graph. They prove that in this setting, as well, the probability of obtaining a pseudo-Anosov braid converges to $1$ as the length of the walk tends to infinity.

\begin{ack}
I would like to thank my PhD advisor Bert Wiest for his help and guidance, Xavier Caruso for fruitful discussions, and Juan Gonz\'alez-Meneses for his careful reading and constructive comments.
\end{ack}

\section{Definitions}

Throughout the article, we fix an integer $n \geqslant 3$. All the considered braids will be braids with $n$ strands.

%Let us point out that the conjecture enunciated in the introduction was already proven in the case $n=3$ by Atalan and Korkmaz, in \cite{AK}.

\subsection{Garside structure}

A general introduction to Garside theory can be found in \cite{DDGM}. The reader can also consult \cite{EM}. We shall only recall some facts which are useful for our purposes.

While the group $\B_n$ admits the well-known presentation of groups
\[\B_n = \left< \sigma_1, \ldots, \sigma_{n-1}\ ;\ \sigma_i \sigma_{i+1} \sigma_i = \sigma_{i+1}\sigma_i \sigma_{i+1} \text{ and } \sigma_i \sigma_j = \sigma_j \sigma_i \text{ for } |i-j| \geqslant 2 \right>,\]
the \emph{monoid of positive braids} $\B_n^+$, which is embedded in $\B_n$, is defined by the same presentation, interpreted as a presentation of monoids.

%Recall that we denote by $\Delta$ the element of $\B_n^+$ defined by
%\[\Delta = (\sigma_1 \cdots \sigma_{n-1})(\sigma_1\cdots\sigma_{n-2}) \cdots (\sigma_1\sigma_2)\sigma_1.\]
For $i < j \leqslant n$, we denote by $\Delta_{ij}$ the element of $\B_n^+$ defined by
$$\Delta_{ij} = (\sigma_i \cdots \sigma_{j-1})(\sigma_i\cdots\sigma_{j-2}) \cdots (\sigma_i\sigma_{i+1})\sigma_i$$
and we denote by $\Delta = \Delta_{1n} \in \B_n^+$.

The pair $(\B_n^+,\Delta)$ defines what we call a Garside structure on $\B_n$. Without giving the complete definition, here are some properties of such a structure. The group $\B_n$ is endowed with a partial order $\preccurlyeq$ defined by $x \preccurlyeq y \Leftrightarrow x^{-1}y \in \B_n^+$. If $x \preccurlyeq y$, we say that $x$ is a \emph{prefix} of $y$. Any two elements $x,y$ of $\B_n$ have a unique greatest common prefix.

We also define $\succcurlyeq$ by $x \succcurlyeq y \Leftrightarrow x y^{-1} \in \B_n^+$. Note that $x \succcurlyeq y$ is not equivalent to $y \preccurlyeq x$.

The elements of the set $\{x \in \B_n, 1 \preccurlyeq x \preccurlyeq \Delta\}$ are called \emph{simple braids}.

\begin{prop}
The set of simple braids is in bijection with the set $\mathfrak S_n$ of permutations of $n$ elements, \emph{via} the canonical projection from $\B_n$ to $\mathfrak S_n$.
\end{prop}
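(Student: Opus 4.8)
\emph{Proof proposal.} The plan is to construct an explicit inverse, defined on simple braids, to the strand--permutation map $\pi\colon \B_n^+\to\mathfrak S_n$ (which sends $\sigma_i$ to the transposition $s_i=(i\ i{+}1)$ and is the restriction of the canonical projection $\B_n\to\mathfrak S_n$), using the combinatorics of reduced words in $\mathfrak S_n$. I would start with two elementary observations. First, every defining relation of $\B_n^+$ preserves the number of letters, so each positive braid $x$ has a well-defined letter-length $|x|\in\N$ with $|xy|=|x|+|y|$; moreover the word $\Delta=(\sigma_1\cdots\sigma_{n-1})(\sigma_1\cdots\sigma_{n-2})\cdots\sigma_1$ has length $\binom n2$, and it projects under $\pi$ to a word of the same length for the longest permutation $w_0=\pi(\Delta)$, hence $|\Delta|=\binom n2=\ell(w_0)$, where $\ell$ denotes the Coxeter length on $\mathfrak S_n$. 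Second, projecting any positive word representing $x$ yields a word for $\pi(x)$, so $\ell(\pi(x))\leqslant|x|$ for every positive braid $x$.

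The essential input is the Matsumoto--Tits theorem: any two reduced words for a permutation $w$ are connected by a sequence of braid moves $s_is_{i+1}s_i\leftrightarrow s_{i+1}s_is_{i+1}$ and commutations $s_is_j\leftrightarrow s_js_i$ (for $|i-j|\geqslant2$), never using $s_i^2=1$. These moves are exactly the defining relations of $\B_n^+$, so lifting a reduced word $w=s_{i_1}\cdots s_{i_k}$ letter by letter to $r(w):=\sigma_{i_1}\cdots\sigma_{i_k}\in\B_n^+$ produces a well-defined map $r\colon\mathfrak S_n\to\B_n^+$ with $\pi\circ r=\mathrm{id}$ and $|r(w)|=\ell(w)$; in particular $r(w_0)=\Delta$. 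I would then check that each $r(w)$ is simple: since $\ell(w)+\ell(w^{-1}w_0)=\ell(w_0)$, concatenating a reduced word for $w$ with one for $w^{-1}w_0$ gives a reduced word for $w_0$, so $r(w)\cdot r(w^{-1}w_0)=\Delta$ with $r(w^{-1}w_0)\in\B_n^+$, whence $1\preccurlyeq r(w)\preccurlyeq\Delta$. As $\pi\circ r=\mathrm{id}$, this already shows that $\pi$ restricted to simple braids is surjective onto $\mathfrak S_n$.

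For injectivity I would first establish that $|x|=\ell(\pi(x))$ for every simple braid $x$. The inequality $\geqslant$ is the elementary remark above; for $\leqslant$, since $x\preccurlyeq\Delta$ the complement $\partial x:=x^{-1}\Delta$ is positive with $|x|+|\partial x|=|\Delta|=\binom n2$, and then $\binom n2=\ell(w_0)\leqslant\ell(\pi(x))+\ell(\pi(\partial x))\leqslant|x|+|\partial x|=\binom n2$ (using subadditivity of $\ell$ and $\pi(x)\pi(\partial x)=w_0$), forcing equality throughout, so $|x|=\ell(\pi(x))$. Now if $x$ is simple with $\pi(x)=w$, any positive word $x=\sigma_{j_1}\cdots\sigma_{j_m}$ has $m=|x|=\ell(w)$, so $w=s_{j_1}\cdots s_{j_m}$ is reduced and therefore $x=r(w)$ by the definition of $r$. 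Hence $x$ is determined by $\pi(x)$, so $\pi$ is injective on simple braids and thus a bijection onto $\mathfrak S_n$, with inverse $r$. The main (and only genuinely non-trivial) obstacle is invoking the Matsumoto--Tits theorem, which is precisely what makes $r$ well defined; an alternative, more self-contained route stays inside Garside theory --- using the lattice structure of $(\B_n^+,\preccurlyeq)$, the existence of left gcds, and the characterisation of $\sigma_i\preccurlyeq x$ for simple $x$ in terms of descents of $\pi(x)$ --- but this essentially re-proves Matsumoto--Tits along the way.
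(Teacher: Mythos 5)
Your proof is correct and complete. The paper itself offers no proof of this proposition: it is stated as a recalled fact from Garside theory, with the reader referred to the cited foundational texts, so there is no argument of the author's to compare yours against. The route you take --- lifting reduced words via the Matsumoto--Tits theorem to get a section $r\colon\mathfrak S_n\to\B_n^+$ of the projection $\pi$, showing $r(w)\preccurlyeq\Delta$ from the additivity $\ell(w)+\ell(w^{-1}w_0)=\ell(w_0)$, and then forcing $|x|=\ell(\pi(x))$ for simple $x$ by squeezing $\binom n2=\ell(w_0)\leqslant\ell(\pi(x))+\ell(\pi(\partial x))\leqslant|x|+|\partial x|=\binom n2$ --- is the standard classical argument (essentially that of Elrifai--Morton and of the Garside-theory literature), and every step checks out: the length function on $\B_n^+$ is well defined because the defining relations are homogeneous, the well-definedness of $r$ is exactly Matsumoto--Tits since the braid moves and commutations are the defining relations of $\B_n^+$ read without the involutivity $s_i^2=1$, and the injectivity argument correctly deduces that any positive word for a simple braid projects to a reduced word. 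Your closing remark is also apt: one could instead work entirely inside the lattice $(\B_n^+,\preccurlyeq)$, but the combinatorial content is the same.
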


\begin{defi}[left-weighting]
Let $s_1$, $s_2$ be two simple braids in $\B_n$. We say that $s_1$ and $s_2$ are \emph{left-weighted}, or that the pair $(s_1,s_2)$ is left-weighted, if there does not exist any generator $\sigma_i$ such that $s_1 \sigma_i$ and $\sigma_i^{-1} s_2$ are both still simple.
\end{defi}

\begin{defi}[starting set, finishing set]
Let $s \in \B_n$ be a simple braid. We call \emph{starting set of $s$} the set $\init s = \{i, \sigma_i \preccurlyeq s\}$ and \emph{finishing set of $s$} the set $\final{s} = \{i, s \succcurlyeq \sigma_i \}$.
\end{defi}

\begin{rem}
Two simple braids $s_1$ and $s_2$ are left-weighted if and only if $\init{s_2} \subset \final{s_1}$.
\end{rem}

\begin{rem}
Let $s$ be a simple braid, and $\pi$ be the permutation associated to $s$. Then $i \in \init s$ if and only if $\pi(i) > \pi(i+1)$, and $i \in \final s$ if and only if $\pi^{-1}(i) > \pi^{-1}(i+1)$.
\end{rem}

\begin{prop}
Let $x \in \B_n$. There exists a unique decomposition $x = \Delta^p x_1\cdots x_r$ such that $x_1, \ldots,x_r$ are simple braids, distinct from $\Delta$ and $1$, and such that $x_i$ and $x_{i+1}$ are left-weighted for all $i = 1, \ldots, r-1$.
\end{prop}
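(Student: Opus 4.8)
The plan is to establish existence and uniqueness separately, in each case reducing first to the case of positive braids.

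\emph{Existence.} It is standard for a Garside structure (see \cite{DDGM}) that every $x \in \B_n$ can be written $x = \Delta^{m} z$ with $m \in \mathbb Z$ and $z \in \B_n^+$; since left-multiplication by $\Delta^{-1}$ merely shifts the exponent, it suffices to treat a positive braid $z$. As the relations of $\B_n^+$ preserve word length, the quantity $\ell$ is well defined and additive on $\B_n^+$, so $q := \max\{k \geqslant 0 : \Delta^k \preccurlyeq z\}$ is finite; writing $z = \Delta^q w$ we have $\Delta \not\preccurlyeq w$. I would then peel off simple factors of $w$ greedily: set $w_1 = \gcd(w,\Delta)$ (the greatest common prefix), write $w = w_1 w'$, and iterate on $w'$. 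Each $w_i$ is simple (being a prefix of $\Delta$), is nontrivial as long as the current remainder is (a nontrivial positive braid has its first letter as a prefix), and is never equal to $\Delta$ (otherwise $\Delta \preccurlyeq w$, using that conjugation by $\Delta$ preserves $\B_n^+$); the process terminates because $\ell$ strictly decreases at each step. The only pairwise relation to check is that $(w_i, w_{i+1})$ is left-weighted: if some generator $\sigma_j$ satisfied $w_i\sigma_j$ simple and $\sigma_j^{-1}w_{i+1}$ simple, then $\sigma_j \preccurlyeq w_{i+1}$, hence $w_i\sigma_j \preccurlyeq w_i w_{i+1}\cdots$; since also $w_i\sigma_j \preccurlyeq \Delta$ and $w_i$ is by construction the greatest common prefix of $w_i w_{i+1}\cdots$ with $\Delta$, this would force $w_i\sigma_j \preccurlyeq w_i$, which is absurd. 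Rewriting $x = \Delta^{m+q} w_1\cdots w_s$ then gives the desired form with $p = m+q$.

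\emph{Uniqueness.} The heart of the matter is the following lemma: if $(y_1,\dots,y_k)$ is a left-weighted sequence of nontrivial simple braids, then $y_1 = \gcd(y_1\cdots y_k,\Delta)$ — that is, local left-weightedness already pins down the ``head'' of the whole product. I would prove it by induction on $k$. The case $k=1$ is clear since $y_1 \preccurlyeq \Delta$. For $k \geqslant 2$, the induction hypothesis applied to $(y_2,\dots,y_k)$ gives $\gcd(y_2\cdots y_k,\Delta)=y_2$, and hence $\init{y_2\cdots y_k}=\init{y_2}$, because a generator divides a positive braid exactly when it divides its greatest common prefix with $\Delta$. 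Now suppose a generator $\sigma_j$ divides $y_1^{-1}\gcd(y_1\cdots y_k,\Delta)$. Then $y_1\sigma_j$ is a prefix of $\Delta$, hence simple, and also $\sigma_j \preccurlyeq y_2\cdots y_k$, so $\sigma_j \preccurlyeq y_2$ by the equality of starting sets; therefore $\sigma_j^{-1}y_2$ is again simple (it is a prefix of the simple braid $\sigma_j^{-1}\Delta$). This contradicts the left-weightedness of $(y_1,y_2)$, so no such $\sigma_j$ exists and $y_1 = \gcd(y_1\cdots y_k,\Delta)$. Granting the lemma, uniqueness is quick: given a decomposition $\Delta^p x_1\cdots x_r$ with the stated properties, the lemma yields $\gcd(x_1\cdots x_r,\Delta)=x_1\neq\Delta$, so $\Delta\not\preccurlyeq x_1\cdots x_r$ and thus $p=\max\{k : \Delta^k\preccurlyeq x\}$ is determined by $x$; two such decompositions therefore share the same $p$, and after cancelling $\Delta^p$ the lemma gives $x_1 = x_1'$, after which one concludes by induction on $r$ (equivalently, on $\ell(x_1\cdots x_r)$).

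The step I expect to be the main obstacle is the uniqueness lemma: the passage from the purely local hypothesis (each consecutive pair is left-weighted) to the global conclusion that $x_1$ is the greatest common prefix of the entire product with $\Delta$. The induction works precisely because ``starting sets do not grow'' along a left-weighted product — i.e. $\init{y_2\cdots y_k}=\init{y_2}$ — and this is exactly the information the induction hypothesis supplies; getting that bookkeeping right is the crux. A more routine technical point is the initial reduction to positive braids, for which I would simply invoke the standard description of $\B_n$ as the group of fractions of $\B_n^+$ with $\Delta$ a Garside element.
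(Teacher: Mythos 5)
The paper states this proposition without proof, treating it as a recalled fact of Garside theory (deferring to the cited references of Dehornoy et al.\ and Elrifai--Morton), so there is no in-paper argument to compare against. Your proof is correct and is essentially the standard one for the greedy (left) normal form: existence by iteratively extracting $\gcd(\,\cdot\,,\Delta)$ from the positive part, and uniqueness via the key lemma that local left-weightedness forces $x_1=\gcd(x_1\cdots x_r,\Delta)$, which is exactly the right pivot. The individual steps all check out -- in particular the verification that the greedily extracted factors are pairwise left-weighted, and the use of $\init{y_2\cdots y_k}=\init{y_2}$ in the induction. Only two points are glossed over, both harmless: the initial fact that every braid is $\Delta^m$ times a positive braid (legitimately quotable), and the degenerate comparison of a decomposition with $r=0$ against one with $r\geqslant 1$, which is settled by noting that a product of nontrivial positive braids is nontrivial.
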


\begin{defi}[left normal form]
In the previous proposition, the writing $x = \Delta^p x_1 \cdots x_r$ is called the \emph{left normal form} of $x$, $p$ is called the \emph{infimum} of $x$ and is denoted by $\inf x$, $p+r$ is the \emph{supremum} of $x$ and is denoted by $\sup x$, and $r$ is called the \emph{canonical length} of~$x$.

Furthermore, if $r \geqslant 1$, we denote by $\iota(x) = \Delta^{p} x_1 \Delta^{-p}$ the \emph{initial factor} of $x$ ($\iota(x) = x_1$ if $p$ is even, $\iota(x) = \Delta x_1 \Delta^{-1}$ if $p$ is odd), and $\phi(x) = x_r$ its \emph{final factor}.
\end{defi}

\begin{defi}[rigidity]
A braid $x$ of positive canonical length is said to be \emph{rigid} if $\phi(x)$ and $\iota(x)$ are left-weighted.
\end{defi}

\subsection{Braids and mapping class group of the punctured disk}

\begin{defi}[Mapping class group of the punctured disk]
Let $\disque n$ be the closed unit disk in $\C$, with $n$ punctures regularly spaced on the real axis. The \emph{mapping class group} of $\disque n$, denoted $\mcg(\disque n)$, is the group of homeomorphisms of $\disque n$, modulo the isotopy relation. We also denote $\mcg(\disque n, \partial \disque n)$ the group of homeomorphisms of $\disque n$ fixing pointwise the boundary $\partial \disque n$ of $\disque n$, modulo the isotopy relation.
\end{defi}

The Artin braid group with $n$ strands is isomorphic to the group $\mcg(\disque n, \partial \disque n)$.

Recall that the classification theorem of Nielsen and Thurston states that a mapping class $f \in \mcg(\disque n)$ is exactly one of the following: periodic, or reducible non-periodic, or pseudo-Anosov. A braid $x \in \mcg(\disque n, \partial \disque n)$ can be projected on an element of $\mcg(\disque n)$. We call \emph{Nielsen-Thurston type of $x$} the Nielsen-Thurston type of its projection. The definition of periodicity is then transformed as follows: a braid $x \in \B_n$ is periodic if and only if there exist nonzero integers $m$ and $l$ such that $x^m = \Delta^l$, where $\Delta = (\sigma_1 \cdots \sigma_{n-1})(\sigma_1\cdots\sigma_{n-2}) \cdots (\sigma_1\sigma_2)\sigma_1$. (Geometrically $\Delta$ corresponds to the half-twist around the boundary of the disk).

\subsection{Round curves and almost round curves}

Let us consider a braid as a mapping class in the mapping class group $\mcg(\disque n, \partial \disque n)$.

\begin{defi}[curve]
We call \emph{closed curve} in $\disque n$ the image of the circle $\mathbb S^1$ by a continuous map with values in $\disque n$. The curve is said to be \emph{simple} if this map is injective. It is said to be \emph{non degenerated} if it is neither homotopic to a point, nor to the boundary of the disk, and it bounds a least two punctures.
\end{defi}

In the following, we simply call curve a homotopy class of non degenerate simple closed curves.

\begin{defi}[round curve]
A curve is said to be \emph{round} if it is represented by a circle in $\disque n$.
\end{defi}

\begin{defi}[almost round curve]
A curve is said to be \emph{almost round} if it is not round, and is the image by a simple braid of a round curve.
\end{defi}

\section{Properties of the left-weighting graph}

\begin{defi}[Left-weighting graph]
We call \emph{left-weighting graph}, denote by $\graphe$, the following finite oriented graph. The vertices are indexed by the simple braids except $1$ and $\Delta$, and there is an edge from the vertex $x_1$ to the vertex $x_2$ if and only if the pair $(x_1,x_2)$ is left-weighted.

We call \emph{path} a sequence $(x_1 \to x_2 \to \cdots \to x_l)$ such that there is an edge from the vertex $x_i$ to the vertex $x_{i+1}$, and the \emph{length} of such a path means the number of edges in the path.
\end{defi}

The objective of this section is to study some properties of the graph $\graphe$, especially some asymptotic properties of the number of paths of length $l$, with $l$ tending to infinity. We introduce the following notations, for all $l \in \N^\ast$:
\begin{itemize}
\item $\nb(l)$ is the number of paths $(x_1 \to x_2 \to \cdots \to x_{l+1})$ of length $l$ in $\graphe$;
\item $\nbb(l)$ is the number of loops of length $l+1$, with marked base vertex, in $\graphe$. The quantity $\nbb(l)$ can also be seen as the number of paths of length $l$, such that there is an edge from the last to the first vertex.
%\item Let $a$ be the edge of $\graphe$ linking the following two braids: $\sigma_1\sigma_3\cdots\sigma_{2\lfloor \frac n 2 \rfloor -1}$ and $\sigma_1\sigma_3\cdots\sigma_{2\lfloor \frac n 2 \rfloor -1}\sigma_2\sigma_4 \cdots \sigma_{2\lceil \frac n 2 \rceil - 2}$ (see Figure \ref{fig:rondes}, page \pageref{fig:rondes}). We denote by $\nbb'(l)$ the number of paths of length $l$ in the graph obtained by removing the edge $a$, such that the last and the first vertices are linked by an edge. That is, the number of loops of length $l+1$ with marked base vertex in $\graphe$, that do not pass through $a$.
%\item Let $a_1$ and $a_2$ be the two successive edges linking \[((\sigma_2 \sigma_3\cdots \sigma_{n-1})(\sigma_2 \sigma_3 \cdots \sigma_{n-2}) \cdots \sigma_2)\sigma_1\] with $\sigma_1$, and $\sigma_1$ with $\sigma_1\sigma_2\cdots \sigma_{n-1}$, respectively (see Figure \ref{fig:semirondes}, page \pageref{fig:semirondes}). We denote by $\nbb''(l)$ the number of loops of length $l+1$ with marked base vertex in $\graphe$, that do not successively pass through $a_1$ and then $a_2$. 
\item Let $w$ be a path of length $k \in \N^\ast$ in $\graphe$. We denote by $\nbw(l)$ the number of paths of length $l$ in $\graphe$ that do not pass through $w$ (\emph{ie} that do not contain $w$ as a subpath), and $\nbbw(l)$ the number of loops of length $l+1$ with marked base vertex in $\graphe$, that do not pass through $w$.
\end{itemize}

\bigskip

Furthermore, if $(u_l)$ and $(v_l)$ are two sequences of real numbers, we write $u_l = \Theta(v_l)$ if and only if there exist constants $c_1, c_2 > 0$ such that for all large enough $l$, $c_1 v_l < u_l < c_2 v_l$. We say that \emph{$u_l$ is of the order of $v_l$}.

We also use the usual notations $u_l \sim v_l$ when \emph{$u_l$ is equivalent to $v_l$}, that is when for all $\varepsilon > 0$, there exists an integer $L$ such that for all $l > L$, $|u_l-v_l| < \varepsilon |v_l|$, and $u_l = O(v_l)$ when there exists $c_2 > 0$ such that for all large enough $l$, $u_l < c_2 v_l$.

We will prove some properties of the left-weighting graph by using the notion of adjacency matrix. For more details on graph theory and adjacency matrices, the reader can consult \cite{graphes}. We recall the following definition and proposition, together with the theorem of Perron-Frobenius.
\begin{defi}[adjacency matrix]
Let $G$ be an oriented finite graph, whose vertices are numbered. We call \emph{adjacency matrix of $G$} the matrix whose $(i,j)$-entry contains the number of edges from the vertex $i$ to the vertex $j$.
\end{defi}

\begin{prop}\label{prop:adj}
Let $G$ be an oriented finite graph and $A$ its adjacency matrix. Let $l \in \N$. The $(i,j)$-entry of the matrix $A^l$ contains the number of paths of length $l$ in $G$ linking the vertex $i$ to the vertex $j$.
\end{prop}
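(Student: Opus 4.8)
The plan is to prove the statement by induction on $l$. For the base case one may take $l = 0$: here $A^0$ is the identity matrix, and indeed there is exactly one path of length $0$ from a vertex to itself and none between two distinct vertices. Alternatively one can start from $l = 1$, where the assertion is essentially the definition of the adjacency matrix; in that case one should already note that a ``path of length $1$ from $i$ to $j$'' means a choice of one of the edges from $i$ to $j$, so that in the presence of parallel edges the count is genuinely the $(i,j)$-entry of $A$, not merely $0$ or $1$.

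For the inductive step, assume that for every pair of vertices the $(i,k)$-entry of $A^l$ equals the number of paths of length $l$ from $i$ to $k$. Writing $A^{l+1} = A^l A$, the $(i,j)$-entry of $A^{l+1}$ is $\sum_k (A^l)_{ik}\, A_{kj}$, the sum running over all vertices $k$ of $G$. The key observation is that every path of length $l+1$ from $i$ to $j$ is obtained in a unique way by concatenating a path of length $l$ from $i$ to some intermediate vertex $k$ with a single edge from $k$ to $j$, and conversely every such concatenation is a path of length $l+1$ from $i$ to $j$. Consequently the number of paths of length $l+1$ from $i$ to $j$ whose vertex at step $l$ is $k$ equals the number of paths of length $l$ from $i$ to $k$ times the number of edges from $k$ to $j$, which by the induction hypothesis and the definition of $A$ is exactly $(A^l)_{ik}\, A_{kj}$. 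Summing over all $k$ gives the $(i,j)$-entry of $A^{l+1}$, completing the induction.

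There is no real obstacle here: the only point demanding a little care is to make the decomposition of a length-$(l+1)$ path into a length-$l$ path followed by an edge genuinely bijective, which forces one to be precise about what ``path'' means when $G$ has parallel edges — a path records the sequence of edges traversed, not merely the sequence of vertices visited. With that convention fixed the induction goes through verbatim, and in our application to $\graphe$, where there is at most one edge between any two vertices, the distinction is immaterial.
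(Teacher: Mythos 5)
Your proof is correct: it is the standard induction via $A^{l+1}=A^lA$ and the bijective decomposition of a length-$(l+1)$ path into a length-$l$ path followed by a final edge, with appropriate care about parallel edges. The paper itself gives no proof of this proposition --- it is recalled as a standard fact with a reference to Godsil--Royle --- and your argument is exactly the canonical one that reference would supply.
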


\begin{theo*}[Perron-Frobenius]
Let $A$ be a matrix such that there exists $k \in \N^\ast$, such that all entries of $A^k$ are positive. Then the spectral radius of $A$ is positive, is a simple eigenvalue of $A$, and is the unique eigenvalue of maximal module.
\end{theo*}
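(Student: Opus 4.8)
The statement is the Perron--Frobenius theorem in the \emph{primitive} case, and since the only matrices to which we shall apply it are adjacency matrices, I shall assume throughout that $A$ has nonnegative entries (this assumption is genuinely needed: for even $k$ and a strictly positive matrix $C$, the matrix $A=-C$ satisfies $A^k=C^k>0$, yet its spectral radius is not an eigenvalue). The plan is to establish the statement first for a \emph{strictly} positive matrix, and then to deduce the primitive case by applying that special case to $B:=A^k$.

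\emph{The strictly positive case.} Let $B$ have all entries $>0$. On the standard simplex $\Sigma=\{x\ge 0:\sum_i x_i=1\}$ the map $x\mapsto Bx/\lVert Bx\rVert_1$ is continuous --- it is well defined because $Bx$ has strictly positive entries whenever $x\in\Sigma$ --- so Brouwer's fixed point theorem provides $v\in\Sigma$ with $Bv=rv$, $r=\lVert Bv\rVert_1>0$, and $v$ has strictly positive entries because $Bv$ does. Applying the same to $B^{\mathrm T}$ gives a strictly positive $u$ with $u^{\mathrm T}B=su^{\mathrm T}$, $s>0$, and pairing yields $su^{\mathrm T}v=u^{\mathrm T}Bv=ru^{\mathrm T}v$, hence $s=r$. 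For an arbitrary eigenvalue $\mu$ of $B$ with eigenvector $w$, the entrywise inequality $|\mu|\,|w|\le B|w|$ paired with $u$ gives $|\mu|\le r$; thus $r=\rho(B)>0$. If moreover $|\mu|=r$, the same pairing forces $B|w|=r|w|$, so $|w|>0$, and then equality in $\bigl|\sum_j B_{ij}w_j\bigr|\le\sum_j B_{ij}|w_j|$ forces all entries of $w$ to share a common argument; hence $w$ is a scalar multiple of $|w|$ and $\mu=r$, so $r$ is the unique eigenvalue of maximal modulus. Finally $r$ is simple: its eigenspace is a line (for a real $r$-eigenvector $w$, after replacing $w$ by $-w$ if necessary one finds $t>0$ with $v-tw\ge 0$ possessing a zero coordinate, and being an $r$-eigenvector it must vanish by positivity of $B$, so $w\parallel v$), and no Jordan block of size $\ge 2$ occurs, since a solution of $Bw=rw+v$ would, after pairing with $u$, yield the impossibility $u^{\mathrm T}v=0$.

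\emph{Deducing the primitive case.} Now let $A\ge 0$ with $B:=A^k>0$, set $\rho:=\rho(B)$ and $r:=\rho^{1/k}>0$, and let $v,u>0$ be the right and left Perron vectors of $B$ furnished by the previous step. The eigenvalues of $A^k$ are the $k$-th powers of those of $A$, so $\rho(A)^k=\rho$ and $\rho(A)=r$. The telescoping vector $w:=\sum_{j=0}^{k-1}r^{-j}A^j v$ satisfies $Aw=rw$ (using $A^kv=r^kv$) and $w\ge v>0$; hence $r$ is an eigenvalue of $A$ with a strictly positive eigenvector, and the same construction applied to $A^{\mathrm T}$ produces a strictly positive left $r$-eigenvector $u'$ of $A$. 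If $\mu$ is an eigenvalue of $A$ with $|\mu|=r$, then $\mu^k$ is an eigenvalue of $B$ of modulus $\rho$, whence $\mu^k=\rho$ by the positive case; writing $\mu=r\zeta$ this means $\zeta^k=1$. Choosing $w$ with $Aw=r\zeta w$ and pairing $r|w|\le A|w|$ with $u'$ gives $A|w|=r|w|$, so $|w|>0$; writing $w_j=|w_j|e^{i\alpha_j}$, equality in the triangle inequality forces $\alpha_i-\alpha_j\equiv\arg\zeta\pmod{2\pi}$ whenever $A_{ji}>0$, and composing these congruences along a path of length $k$ --- which exists between any two indices since $A^k>0$ --- yields $\alpha_i-\alpha_j\equiv k\arg\zeta\equiv 0$; therefore all the $\alpha_j$ coincide, $w$ is a scalar multiple of $|w|$, and $\zeta=1$, i.e.\ $\mu=r$. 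Algebraic simplicity of $r$ for $A$ then follows from the Jordan form: the multiplicity of $\rho$ as a root of the characteristic polynomial of $A^k$ equals the sum of the multiplicities of those eigenvalues $\lambda$ of $A$ with $\lambda^k=\rho$; the preceding sentence shows $r$ is the only such $\lambda$, and that multiplicity is $1$ by the positive case.

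\emph{Main obstacle.} The genuinely non-formal inputs are the use of Brouwer's theorem to produce a strictly positive eigenvector, and the phase-propagation argument in the last paragraph. The latter is the point where the full strength of primitivity is used: it is precisely the existence of a length-$k$ path joining \emph{every} ordered pair of vertices that upgrades ``a dominant eigenvalue of modulus $r$'' to ``the eigenvalue $r$'', and thereby rules out complex eigenvalues of maximal modulus.
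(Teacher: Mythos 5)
The paper does not prove this statement at all: it is quoted as a classical black-box theorem (with \cite{graphes} given only as general background) and is simply applied to the adjacency matrices $A$ and $A_k$, so there is no in-paper argument to compare yours against. Your proof is correct and complete, and follows the standard line: Brouwer's fixed point theorem on the simplex to produce strictly positive right and left eigenvectors of a positive matrix, pairing against the left eigenvector to get maximality of $r$ and to rule out a Jordan block, the minimal-$t$ argument for one-dimensionality of the eigenspace, equality in the triangle inequality to identify the peripheral spectrum, and the telescoping sum $\sum_{j=0}^{k-1} r^{-j}A^j v$ plus the phase-propagation argument along length-$k$ paths to pass from $A^k>0$ to $A$ itself. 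The one genuinely valuable observation is your first one: the statement as printed omits the hypothesis that $A$ has nonnegative entries, and your counterexample $A=-C$ with $C>0$ and $k$ even shows this hypothesis cannot be dropped. In the paper this is harmless, since the theorem is only ever invoked for adjacency matrices (of $\graphe$, of $\graphek$ and of $\graphekw$), which are nonnegative by construction, but the hypothesis should be stated.
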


\begin{lem}\label{lem:connexite}
Each pair of vertices in $\graphe$ is linked by at least one path of length exactly~$5$.
\end{lem}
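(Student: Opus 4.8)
The claim is that for any two simple braids $s, t$ (both distinct from $1$ and $\Delta$) there is a path $s = y_0 \to y_1 \to y_2 \to y_3 \to y_4 \to y_5 = t$ of length exactly $5$ in $\graphe$. The plan is to route through a small number of ``universal'' intermediate vertices whose starting and finishing sets we can control completely, using the characterization that $(y_i, y_{i+1})$ is an edge iff $\init{y_{i+1}} \subset \final{y_i}$.

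First I would identify the key building block: for a simple braid $y$, we have $\init{y} \subset \final{\Delta y^{-1}\Delta^{-1}}$? No — more simply, note that if $y$ is simple then $\partial y := \Delta y^{-1}$ (the ``right complement'', which is simple) has the property $\final{\partial y} = \{1,\dots,n-1\} \setminus \final{y}$ is false in general; the correct fact is $\init{\partial y}$ and $\final{y}$ are complementary in a suitable sense, but rather than chase this I would use the two ``extreme'' simple braids: $\delta = \sigma_1\sigma_2\cdots\sigma_{n-1}$ has $\init{\delta} = \{1\}$ and $\final{\delta} = \{n-1\}$, and its mirror $\delta' = \sigma_{n-1}\cdots\sigma_1$ has $\init{\delta'} = \{n-1\}$ and $\final{\delta'} = \{1\}$ (for $n \geqslant 3$; for $n=3$ one must be slightly careful but these still work). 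Then: from \emph{any} $s$ with $\final{s} \ni 1$ we can go to $\delta$; but $\final{s}$ need not contain $1$. So I would first take one step from $s$ to $\Delta\sigma_j^{-1}$ for a suitable $j$ — these near-maximal simple braids have $\final{\Delta\sigma_j^{-1}} = \{1,\dots,n-1\}\setminus\{j\}$ and $\init{\Delta\sigma_j^{-1}}=\{1,\dots,n-1\}\setminus\{j\}$, so $(s, \Delta\sigma_j^{-1})$ is always an edge provided $\init{\Delta\sigma_j^{-1}} \subset \final{s}$, i.e. provided $\final{s} \supseteq \{1,\dots,n-1\}\setminus\{j\}$; choosing $j \notin \final{s}$ works as long as $\final{s}$ is not all of $\{1,\dots,n-1\}$, and if $\final{s} = \{1,\dots,n-1\}$ then $s = \Delta$, excluded. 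Symmetrically, the last step into $t$ comes from $\Delta\sigma_k^{-1}$ with $k \notin \init{t}$.

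So the route is $s \to \Delta\sigma_j^{-1} \to (\text{?}) \to (\text{?}) \to \Delta\sigma_k^{-1} \to t$, and it remains to connect $\Delta\sigma_j^{-1}$ to $\Delta\sigma_k^{-1}$ by a path of length exactly $3$ for every pair $(j,k)$. Since $\final{\Delta\sigma_j^{-1}} = \{1,\dots,n-1\}\setminus\{j\}$ contains at least two elements (as $n \geqslant 3$), and likewise $\init{\Delta\sigma_k^{-1}}$ misses only $k$, I would look for a simple braid $m$ with $\init m \subset \{1,\dots,n-1\}\setminus\{j\}$ and $\final m \supseteq \{1,\dots,n-1\}\setminus\{k\}$ to do it in two steps; e.g. $m = \Delta\sigma_j^{-1}$ composed appropriately, or a single well-chosen $\Delta\sigma_i^{-1}$ together with one more. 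Concretely $\Delta\sigma_j^{-1} \to \Delta\sigma_a^{-1} \to \Delta\sigma_k^{-1}$ is an edge-path whenever $a \neq j$ (first edge: $\init{\Delta\sigma_a^{-1}} = $ everything but $a \subset $ everything but $j = \final{\Delta\sigma_j^{-1}}$) and $k \neq a$ (second edge similarly). Since $n-1 \geqslant 2$ we can always pick such an $a$, giving a path of length $2$ between any two of these near-maximal braids — hence length exactly $5$ overall after padding if needed. To make the length \emph{exactly} $5$ rather than ``at most $5$'', I would insert extra hops among the $\Delta\sigma_i^{-1}$'s (which all connect to each other freely since one can always change the missing index), absorbing slack.

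The main obstacle I anticipate is the careful verification of the starting and finishing sets of the braids $\Delta\sigma_i^{-1}$ and the two ``linear'' braids $\delta, \delta'$, especially checking the boundary cases $n = 3$ where these sets are small and the graph $\graphe$ has few vertices, so that one must make sure the chosen intermediate vertices are genuinely distinct from $1$ and $\Delta$ and that all required edges exist; padding the path to have length \emph{exactly} $5$ (not just $\leqslant 5$) is a minor but real bookkeeping point that the flexibility among the $\Delta\sigma_i^{-1}$ vertices resolves.
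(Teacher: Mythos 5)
There is a genuine gap here, and it stems from a single recurring inversion of a set inclusion. An edge $x \to y$ in $\graphe$ exists exactly when $\init{y} \subset \final{x}$: the \emph{starting set of the target} must be contained in the \emph{finishing set of the source}. Your near-maximal braids $\Delta\sigma_j^{-1}$ have near-full starting sets (in fact $\init{\Delta\sigma_j^{-1}} = \{1,\ldots,n-1\}\setminus\{n-j\}$, with index $n-j$ rather than $j$ --- a harmless relabelling), so they are very easy to \emph{leave} but very hard to \emph{enter}: the edge $s \to \Delta\sigma_j^{-1}$ requires $\final{s} \supseteq \{1,\ldots,n-1\}\setminus\{n-j\}$, i.e.\ $\final{s}$ must contain all but at most one index. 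For $n \geqslant 4$ and $s = \sigma_1$, we have $\final{s} = \{1\}$ and no choice of $j$ works, so your very first edge does not exist. Choosing ``$j \notin \final{s}$'' yields $\final{s} \subseteq \{1,\ldots,n-1\}\setminus\{j\}$, which is the reverse of the inclusion you need. The same inversion breaks the middle of the route: $\{1,\ldots,n-1\}\setminus\{a\} \subset \{1,\ldots,n-1\}\setminus\{j\}$ holds precisely when $a = j$ (removing an element from a set shrinks it, so complements reverse inclusions), not when $a \neq j$; hence for $n \geqslant 4$ the vertices $\Delta\sigma_i^{-1}$ do \emph{not} connect to each other freely --- from $\Delta\sigma_j^{-1}$ only one of them is reachable in one step --- and the ``padding'' device collapses as well. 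Only your final edge $\Delta\sigma_k^{-1} \to t$ with $k$ outside (the suitably relabelled) $\init{t}$ is sound.

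The repair is to run the route in the opposite order of ``size''. One can always \emph{leave} $s$ through a vertex $x_1$ with a singleton starting set $\init{x_1} = \{i_1\}$ for some $i_1 \in \final{s}$ (the inclusion $\{i_1\} \subset \final{s}$ is then automatic), and one should only arrive at a near-maximal vertex at the very last intermediate step, where its large finishing set $\{1,\ldots,n-1\}\setminus\{i_2\}$ absorbs $\init{t}$. The real content of the lemma is then to exhibit explicit simple braids realizing a chain of (starting set, finishing set) pairs that climbs from a singleton to an $(n-2)$-element set in exactly the right number of steps --- this is what the paper's braids $x_1, x_2, x_3, x_4$ do, passing through the set of odd indices as an intermediate stage --- and that construction is missing from your proposal.
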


\begin{proof}
Let us recall that two simple braids $s$ and $t$ are left-weighted if and only if $\init{t} \subset \final{s}$. Let $s_1$ and $s_2$ be two simple braids distinct from $1$ and $\Delta$. There exist $i_1$ and $i_2$ in $\{1, \ldots, n-1\}$ such that $\final{s_1} \supset \{i_1\}$ and $\init{s_2} \subset \{1,\ldots,n-1\}\backslash\{i_2\}$. We will construct some simple braids $x_1,x_2,x_3,x_4$ satisfying:
\begin{itemize}
\item $\init{x_1} = \{i_1\}$,
\item $\final{x_1} = \init{x_2} = \{\lfloor \frac n 2 \rfloor\}$,
\item $\final{x_2} = \init{x_3} = \{1, 3, \ldots, 2\lfloor \frac n 2 \rfloor -1\}$ (the set of odd numbers between $1$ and $n-1$),
\item $\final{x_3} = \init{x_4} = \{1,\ldots,n-1\}\backslash\{\lfloor \frac n 2 \rfloor\}$,
\item $\final{x_4} = \{1,\ldots,n-1\}\backslash\{i_2\}$.
\end{itemize}
Thus, $s_1 \to x_1 \to x_2 \to x_3 \to x_4 \to s_2$ will be a path of length $5$ in the graph $\graphe$.

Here is how we choose the braids $x_1,x_2,x_3,x_4$. We set $x_1 = \sigma_{i_1} \cdots \sigma_{\lfloor \frac n 2 \rfloor}$. The simple braid $x_2$ is the braid corresponding to following permutation:
\[\pi_2 = \begin{pmatrix} 1 & 2 &\cdots & \lfloor \frac n 2 \rfloor & \lfloor \frac n 2 \rfloor+1 & \lfloor \frac n 2 \rfloor+2 & \cdots & n\\
2 & 4 & \cdots & 2\lfloor \frac n 2 \rfloor & 1 & 3 & \cdots &  2 \lceil \frac n 2 \rceil -1\end{pmatrix}\]
As to the braid $x_3$, it is equal to $\bar x_2 \Delta_{1,\lfloor \frac n 2 \rfloor} \Delta_{\lfloor \frac n 2 \rfloor+1,n}$, where $\bar x_2$ is the simple braid of permutation $\pi_2^{-1}$.
Finally, $x_4 = \Delta \sigma_{\lceil \frac n 2 \rceil}^{-1} \cdots \sigma_{i_2}^{-1}$ is the left complement of $\sigma_{i_2} \cdots \sigma_{\lceil \frac n 2 \rceil}$. The braids $x_1$ to $x_4$ are represented for $n=6$ in Figure \ref{fig:chemin}.

\begin{figure}[!h]
\centering
\includegraphics[scale=0.9]{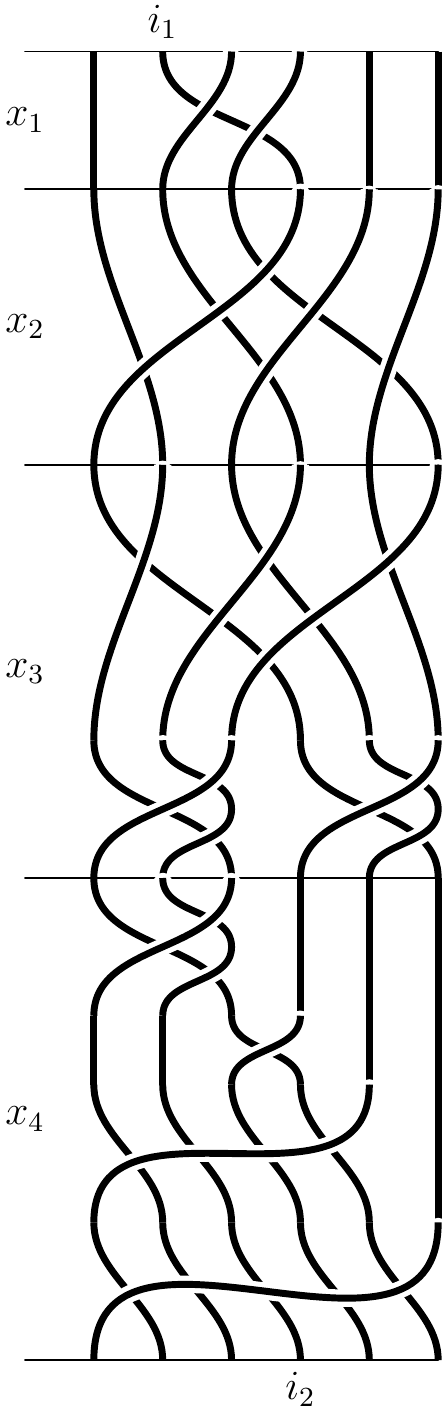}
\caption{Braids $x_1$ to $x_{4}$}
\label{fig:chemin}
\end{figure}

Of course $\init{x_1} = \{i_1\}$ and $\final{x_1} = \{\lfloor \frac n 2 \rfloor\}$. For $x_2$, the permutation $\pi_2$ is increasing on $\{1, \ldots, \lfloor \frac n 2 \rfloor\}$ and on $\{\lfloor \frac n 2 \rfloor+1, \ldots, n\}$, and we have $\pi_2(\lfloor \frac n 2 \rfloor+1) < \pi_2(\lfloor \frac n 2 \rfloor)$, so $\init{x_2} = \{\lfloor \frac n 2 \rfloor\}$. On the other hand, $\pi_2^{-1}(i) > \pi_2^{-1}(i+1)$ if and only if $i$ is odd, hence $\final{x_2} =  \{1, 3, \ldots, 2\lfloor \frac n 2 \rfloor -1\}$. The permutation $\pi_3$ associated with $x_3$ first applies $\pi_2^{-1}$, then reverses the order, on the one hand, of the elements from $1$ to $\lfloor \frac n 2 \rfloor$, and on the other hand, of $\lfloor \frac n 2 \rfloor+1$ to $n$. It follows that $\pi_3(i) > \pi_3(i+1)$ if and only if $i$ is odd, and that $\pi_3^{-1}(i) > \pi_3^{-1}(i+1)$ for all $i$ except $i = \lfloor \frac n 2 \rfloor$. So $\init{x_3} =  \{1, 3, \ldots, 2\lfloor \frac n 2 \rfloor -1\}$ and $\final{x_3} = \{1,\ldots,n-1\}\backslash\{\lfloor \frac n 2 \rfloor\}$. Finally, $x_4$ is the left complement of $\sigma_{i_2} \cdots \sigma_{\lceil \frac n 2 \rceil}$, and thus satisfies $\init{x_4} =  \{1,\ldots,n-1\}\backslash\{n - \lceil \frac n 2 \rceil\} = \{1,\ldots,n-1\}\backslash\{\lfloor \frac n 2 \rfloor\}$ and $\final{x_4} =  \{1,\ldots,n-1\}\backslash\{i_2\}$.
\end{proof}

\begin{lem}\label{lem:graphe}
The following properties are true.
\begin{enumerate}[\rm (i)]
\item There exists a constant $\g$ such that $\nbb(l) \sim \g^{l+1}$.
\item We have $\nb(l) = \Theta(\g^l)$. In particular, for large enough $l$, the proportion $\nbb(l)/\nb(l)$ is bounded below, independently of $l$, by a positive constant.
%\item There exists a constant $\gp < \g$ such that $\nbb'(l) \sim \gp^{l+1}$.
%\item There exists a constant $\gpp < \g$ such that $\nbb''(l) \sim \gpp^{l+1}$.
\item For all path $w$, there exists a constant $\gw < \g$ such that $\nbw(l) = O(\gw^l)$ and $\nbbw(l) = O(\gw^l)$.
\end{enumerate}
\end{lem}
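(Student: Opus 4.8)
The plan is to prove the three claims in sequence, in each case passing through the adjacency matrix $A$ of $\graphe$ and Perron--Frobenius.

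First I would set $\g$ to be the spectral radius of $A$. By Lemma~\ref{lem:connexite}, all entries of $A^5$ are positive, so the hypotheses of the Perron--Frobenius theorem are met: $\g > 0$ is a simple eigenvalue of $A$, it is the unique eigenvalue of maximal modulus, and it admits left and right eigenvectors with strictly positive entries. For claim (i), observe that $\nbb(l) = \tr(A^{l+1})$, since a loop of length $l+1$ with marked base vertex starting and ending at vertex $i$ is exactly a path of length $l+1$ from $i$ to $i$, and Proposition~\ref{prop:adj} counts these as the $(i,i)$-entry of $A^{l+1}$. Writing $\tr(A^{l+1}) = \sum_j \theta_j^{l+1}$ where the $\theta_j$ are the eigenvalues of $A$ with multiplicity, the term $\g^{l+1}$ dominates because $|\theta_j| < \g$ for all other $j$ (here I use that $\g$ is simple, so it appears exactly once, and that it strictly dominates in modulus); the contribution of the other eigenvalues, even accounting for possible non-diagonalizability via Jordan blocks, is $O((\g - \delta)^l)$ for some $\delta > 0$, hence negligible. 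So $\nbb(l) \sim \g^{l+1}$.

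For claim (ii), $\nb(l) = \sum_{i,j} (A^l)_{ij} = \mathbf{1}^\top A^l \mathbf{1}$, where $\mathbf 1$ is the all-ones vector. Decomposing $\mathbf 1$ along the Perron eigenvector (which has a nonzero, indeed positive, component since the eigenvector has strictly positive entries and $\mathbf 1$ does too), the leading term is a positive multiple of $\g^l$, and the remainder is $O((\g-\delta)^l)$; this gives $\nb(l) = \Theta(\g^l)$. Combined with (i), $\nbb(l)/\nb(l)$ is bounded below by a positive constant for large $l$, since both are of order $\g^l$ (up to the harmless factor $\g$ in the exponent of (i)).

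Claim (iii) is the real work. Given a path $w = (w_1 \to \cdots \to w_{k+1})$, I would build an auxiliary graph $\graphea$ (or equivalently an auxiliary matrix $\gwk$) whose paths correspond bijectively to paths in $\graphe$ avoiding $w$ as a subpath: the standard construction is to track, in the vertex set, how much of a prefix of $w$ has just been traversed — i.e. take vertices to be pairs $(x, j)$ where $x$ is a vertex of $\graphe$ and $j$ records the length of the longest suffix of the path read so far that is a prefix of $w$ — and forbid the transition that would complete $w$ (an Aho--Corasick / de Bruijn style automaton). Then $\nbw(l)$ and $\nbbw(l)$ are controlled by powers and traces of the adjacency matrix $\widetilde A$ of this auxiliary graph, so by the same Perron--Frobenius argument as above (after checking $\widetilde A$ has a dominating eigenvalue $\gw$, or at least that its spectral radius $\gw$ controls the growth) we get $\nbw(l) = O(\gw^l)$ and $\nbbw(l) = O(\gw^l)$. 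The key point, and the main obstacle, is to show the strict inequality $\gw < \g$: one must argue that removing the single transition completing $w$ strictly decreases the growth rate. I would prove this by contradiction using the Perron eigenvector $v > 0$ of $A$: if $\gw = \g$, then $\widetilde A$ would also have $\g$ as its spectral radius with a nonnegative eigenvector, and lifting/projecting between $\graphe$ and $\graphea$ one can produce from it a nonnegative eigenvector of $A$ for the eigenvalue $\g$ that is ``supported away'' from the forbidden transition, contradicting either the strict positivity of the Perron eigenvector of $A$ or its uniqueness (simplicity). Concretely: the number of $w$-avoiding paths of length $l$ is $\nb(l)$ minus the number of paths that do contain $w$; the latter is itself of order $\g^l$ because one can freely prepend and append arbitrary paths to an occurrence of $w$ (using Lemma~\ref{lem:connexite} to connect things), and a more careful inclusion–exclusion or renewal-type estimate shows the count of $w$-containing paths is $\sim c\,\g^l$ with $c > 0$, forcing $\nbw(l)$ to grow strictly slower, i.e. $\gw < \g$.
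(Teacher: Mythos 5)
Parts (i) and (ii) of your argument are correct and essentially the paper's: Perron--Frobenius applies to $A$ because $A^5$ has positive entries by Lemma~\ref{lem:connexite}, $\nbb(l)=\tr(A^{l+1})$ is dominated by the simple top eigenvalue, and $\nb(l)$ is the sum of the entries of $A^l$, which is $\Theta(\g^l)$. For (iii), your automaton construction (tracking the longest suffix of the path read so far that is a prefix of $w$) is a legitimate variant of the paper's construction, which instead passes to the graph $\graphek$ whose vertices are the paths of length $k-1$ in $\graphe$, so that avoiding $w$ amounts to deleting a single edge; both reduce the problem to showing that the spectral radius $\gw$ of the reduced matrix is \emph{strictly} smaller than $\g$.

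That strict inequality is exactly where your proposal has a genuine gap. Your first sketch (produce a nonnegative $\g$-eigenvector of $A$ from one of $\widetilde A$ by ``lifting/projecting'') is not an argument: the automaton's states fiber over the vertices of $\graphe$, and summing or maximizing an eigenvector of $\widetilde A$ over the fibers does not satisfy the eigenvector equation for $A$, so the contradiction you invoke is never actually produced. Your second, ``concrete'' sketch is a non sequitur: knowing that the number of length-$l$ paths \emph{containing} $w$ is $\sim c\,\g^l$ with $c>0$ does not force $\nbw(l)$ to grow strictly slower than $\g^l$ --- if $\nb(l)\sim C\g^l$ and $0<c<C$, then $\nbw(l)\sim (C-c)\g^l$ and one would get $\gw=\g$. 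You would need the containing count to exhaust the full leading constant, i.e.\ $\nbw(l)=o(\g^l)$, and nothing in your sketch delivers that. The paper closes this gap by a direct entrywise comparison: by Lemma~\ref{lem:connexite}, any two vertices of $\graphek$ are joined by a path of length $l_0=2k+9$ passing through the deleted edge, so $A_k^{l_0}-\tilde A_k^{l_0}$ has all entries positive; choosing $\varepsilon>0$ with $A_k^{l_0}\geqslant \tilde A_k^{l_0}+\varepsilon I$ entrywise and using that the spectral radius of a nonnegative matrix is monotone under entrywise domination (via $\rho(B)=\lim \Vert B^m\Vert^{1/m}$) yields $\g^{l_0}\geqslant \gw^{l_0}+\varepsilon$, hence $\gw<\g$. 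Some such quantitative comparison is required; without it, part (iii) is not proved.
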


The reader can also consult \cite{Deh}, which contains results and proofs similar to those of this lemma.

\begin{proof}[Proof of Lemma \ref{lem:graphe}]
%Let us begin with the following remark: in $\graphe$, each pair of vertices is linked by at least one path of length exactly $4$. Indeed, given two simple braids $s$ and $t$, there exists a crossing $\sigma_i$ so that there is an edge from $s$ to $\sigma_i$, a crossing $s_j$ such that there is an edge from $\sigma_j$ to $t$, and $\sigma_i$ and $\sigma_j$ are linked by a path of length $2$, \emph{via} $\sigma_i \sigma_{i+1} \cdots \sigma_j$ if $i \leqslant j$, or $\sigma_i \sigma_{i-1} \cdots \sigma_j$ if $i \geqslant j$.

Let $A$ be the adjacency matrix of the graph $\graphe$. According to Proposition \ref{prop:adj}, $\nbb(l) = \tr(A^{l+1})$ and $\nb(l) = | A^l |_1$, where $| \cdot |_1$ is the sum of all entries in the matrix.

(i) According to Lemma \ref{lem:connexite}, the matrix $A^5$ has positive entries. So we can apply the Perron-Frobenius theorem to $A$, and deduce that $A$ has a unique eigenvalue of maximal module. This value is real and positive, and the associated eigenspace has dimension $1$. We denote by $\g$ this eigenvalue, and by $\lambda_i$ ($i = 1, \ldots, n!-3$) the others (not necessarily distinct and not necessarily real). We have $\tr(A^{l+1}) = \g^{l+1} + \lambda_1^{l+1} + \cdots + \lambda_{n!-3}^{l+1}$, hence $\nbb(l) \sim \g^{l+1}$ when $l$ tends to infinity.

(ii) There exists an invertible matrix $P$ such that $P A P^{-1}$ is in Jordan normal form, and we can calculate
\[|P A^l P^{-1}|_1 = \g^l + \sum p_i(\lambda_i)\]
where the $p_i$ are some polynomials of degree $l$. We deduce again the equivalence $|P A^l P^{-1}|_1 \sim \g^l$. Furthermore, $|PA^lP^{-1}|_1 = \Theta(|A^l|_1)$, so $\nb(l) = \Theta(\g^l)$.

We deduce that $\nbb(l)/\nb(l) = \Theta(1)$, and in particular, that for large enough $l$, this ratio is bounded below, independently of $l$, by a positive constant.

(iii) We construct from $\graphe$ a graph $\graphek$ (where, we recall, $k$ is the length of the path $w$)  as follows: the vertices in $\graphek$ are the paths of length $k-1$ in $\graphe$, and two paths $w_1 = (s_1\to \cdots \to s_k)$ and $w_2 = (t_1 \to \cdots \to t_{k})$ are linked by an edge if and only if $s_2 = t_1, s_3 = t_2, \ldots, s_{k} = t_{k-1}$. Thus, the edges of $\graphek$ correspond to the paths of length $k$ in $\graphe$. We denote by $A_k$ the adjacency matrix of $\graphek$.

If $s$ and $t$ are two vertices in $\graphe$, a path of length $l \geqslant k$ from $s$ to $t$ in $\graphe$ corresponds to a path of length $l-k+1$ from ${(s \to s_2 \to \cdots \to s_{k})}$ to ${(t_1 \to \cdots \to t_{k} \to t)}$ in $\graphek$ for some $s_2, \ldots, s_{k-1},t_1, \ldots, t_{k-2}$. This leads to the following consequences. As each pair of vertices in $\graphe$ is linked by a path of length $5$ (Lemma \ref{lem:connexite}), each pair of vertices in $\graphek$ is linked by a path of length exactly $k+4$. Furthermore, as the number of paths of length $l$ in $\graphe$ is a $\Theta(\g^l)$, it is the same for the number of paths of length $l$ in $\graphek$. As $A_k^{k+4}$ has positive entries, $A_k$ satisfies the hypothesis of the Perron-Frobenius theorem, and we deduce, as in (ii), that the number of paths of length $l$ in $\graphek$ is a $\Theta(\g_{(k)}^l)$ where $\g_{(k)}$ is the spectral radius of $A_k$. The two asymptotic estimates obtained ensure that $\g_{(k)} = \g$.

Moreover, avoiding a path of length $k$ in $\graphe$ is equivalent to avoiding an edge in $\graphek$. Let $\graphekw$ be the graph obtained from $\graphek$ by removing the edge $a_w$ corresponding to $w$. We denote by $\tilde A_k$ its adjacency matrix, and by $\gw$ the spectral radius of this matrix. Then, the number of paths of length $l-k+1$ in $\graphek$ is a $O(\gw^l)$: indeed, as in (ii), there exists an invertible matrix $Q$ such that $|Q \tilde A_k^{l-k+1} Q^{-1}|_1$ is a sum of polynomials of degree $l-k+1$ in the eigenvalues of $\tilde A_k$. As these eigenvalues are, in module, not greater than the spectral radius $\gw$, we deduce that $|Q \tilde A_k^{l-k+1} Q^{-1}|_1 = O(\gw^{l-k+1}) = O(\gw^l)$, and then, that $\nbw(l) = |\tilde A_k^{l-k+1}|_1 = O(\gw^l)$.

As for the number of loops of length $l+1$ with marked base point in $\graphe$, their number is not greater than the number of paths of length $l$, and so we have also $\nbbw(l) = O(\gw^l)$.

It remains to prove that $\gw < \g$.

Given two vertices $w_1 = (s_1 \to \cdots \to s_k)$ and $w_2 = (t_1 \to \cdots \to t_k)$ of $\graphek$, there always exists a path of length $l_0 = 2k+9$ in $\graphek$ from $w_1$ to $w_2$ passing through the edge $a_w$: indeed, it suffices to go with a path of length $k+4$ until the starting vertex of $a_w$, to go through the edge $a_w$, and to go again with a path of length $k+4$ until $w_2$. This means that there are strictly more paths of length $l_0$ from $w_1$ to $w_2$ in $\graphek$ than in $\graphekw$. That is to say, the matrix $A_k^{l_0} - \tilde A_k^{l_0}$ has positive entries.
Let $\varepsilon > 0$ be such that $A_k^{l_0} - (\tilde A_k^{l_0} + \varepsilon I)$ has still positive entries (where $I$ is the identity matrix). The spectral radius of $A_k^{l_0}$ is $\g^{l_0}$, the one of $\tilde A_k^{l_0} + \varepsilon I$ is $\gw^{l_0}+\varepsilon$. Recall that the spectral radius of a matrix $B$ is the limit of $\Vert B^k \Vert^{\frac 1 k}$ when $k$ tends to infinity, where $\Vert \cdot \Vert$ is any matrix norm. By choosing for $\Vert \cdot \Vert$, for example, the infinity-norm, we deduce that, as the entries of $A_k^{l_0}$ are all greater than those of $(\tilde A_k^{l_0} + \varepsilon I)$, we have $\g^{l_0} \geqslant \gw^{l_0}+\varepsilon$, and thus $\g > \gw$.
\end{proof}

\begin{rem}
By similar arguments, we obtain finer results, on the number of paths that do not contain $w$ \emph{in a more localized area of the path}. More precisely, if $\beta$ is a path of length $l$, and if $a_1$, $a_2$, $a_3$ are functions of $l$ taking values in $\N$, with $a_1+a_3$ and $a_2$ nondecreasing functions that tends to infinity when $l$ tends to infinity, and such that $a_1(l) + a_2(l) + a_3(l) = l$, we can cut the path $\beta$ into three path $\beta_1$, $\beta_2$ and $\beta_3$ of respective lengths $a_1(l)$, $a_2(l)$ and $a_3(l)$. The number of paths $\beta$ of length $l$ whose ``middle part'' $\beta_2$ does not contain the path $w$ is a $\Theta(\gw^{a_2(l)} \g^{a_1(l) + a_3(l)}) = \Theta(\gw^{a_2(l)} \g^{l-a_2(l)})$.
\end{rem}

\section{Genericity of pseudo-Anosov braids}

\subsection{Proportion of rigid braids}

\begin{prop}
Let $l \in \N^\ast$. Among the braids $\beta$ such that $\inf \beta = 0$ and $\sup \beta = l$, the proportion of rigid braids is bounded below independently of $l$ by a positive constant.
\end{prop}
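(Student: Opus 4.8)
The plan is to translate the statement into a count of paths in the left-weighting graph $\graphe$ and then quote Lemma~\ref{lem:graphe}(ii).

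First I would record the dictionary between braids and paths. By the uniqueness of the left normal form, a braid $\beta$ with $\inf\beta = 0$ and $\sup\beta = l$ is the same thing as a sequence $\beta = x_1 x_2 \cdots x_l$ of simple braids, each distinct from $1$ and $\Delta$, with every consecutive pair $(x_i,x_{i+1})$ left-weighted; that is, exactly a path $(x_1 \to x_2 \to \cdots \to x_l)$ of length $l-1$ in $\graphe$. Hence the number of such braids is $\nb(l-1)$.

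Next I would single out the rigid ones. When $\inf\beta = 0$ we have $\iota(\beta) = x_1$ and $\phi(\beta) = x_l$, so $\beta$ is rigid precisely when the pair $(x_l,x_1)$ is left-weighted, i.e.\ when $\graphe$ contains an edge from $x_l$ back to $x_1$. Thus the rigid braids with $\inf\beta = 0$ and $\sup\beta = l$ are in bijection with the paths of length $l-1$ in $\graphe$ that admit an edge from their last vertex to their first vertex, and there are exactly $\nbb(l-1)$ of those (no overcounting occurs, since the first factor of the normal form canonically plays the role of the marked base vertex). So the proportion to be bounded below is $\nbb(l-1)/\nb(l-1)$.

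Finally I would conclude. By Lemma~\ref{lem:graphe}(ii) there are a constant $c_0 > 0$ and an integer $L$ with $\nbb(m)/\nb(m) \geqslant c_0$ for all $m \geqslant L$; applied with $m = l-1$ this handles every $l \geqslant L+1$. For each of the finitely many remaining values $l \in \{1, \ldots, L\}$ the ratio $\nbb(l-1)/\nb(l-1)$ is a strictly positive rational: indeed $\sigma_1^l$ has left normal form $\sigma_1 \cdots \sigma_1$ (the pair $(\sigma_1,\sigma_1)$ is left-weighted because $\init{\sigma_1} = \final{\sigma_1} = \{1\}$, and $\sigma_1 \notin \{1,\Delta\}$ since $n \geqslant 3$), whence $\inf \sigma_1^l = 0$, $\sup \sigma_1^l = l$, and it is rigid because $\iota = \phi = \sigma_1$. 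Taking $c$ to be the minimum of $c_0$ and these finitely many positive rationals gives a lower bound valid for all $l \in \N^\ast$. There is no genuine obstacle here: all the analytic content sits in Lemma~\ref{lem:graphe}, and the only points requiring care are the index shift $l \mapsto l-1$ between braids and paths, and the handful of small values of $l$ not covered by the asymptotic estimate.
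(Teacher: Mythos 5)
Your proof is correct and follows essentially the same route as the paper: the bijection between normal forms of braids with $\inf\beta=0$, $\sup\beta=l$ and paths in $\graphe$, the identification of rigid braids with paths admitting an edge from last to first vertex, and the appeal to Lemma~\ref{lem:graphe}(ii). You are in fact slightly more careful than the paper on two points --- the index shift (a braid with $l$ factors is a path of length $l-1$, i.e.\ with $l$ vertices, in the paper's convention) and the finitely many small values of $l$ not covered by the asymptotic bound, which you settle with the rigid braid $\sigma_1^l$.
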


\begin{proof}
According to the unicity of the left normal form of a braid, the set of all braids $\beta$ such that $\inf \beta = 0$ and $\sup \beta = l$ is in bijection with the set of paths of length $l$ in the left-weighting graph $\graphe$. The set of \emph{rigid} braids of infimum $0$ and supremum $l$ is in bijection with the set of paths of length $l$, for which there is an edge from the last to the first vertex. Hence, the proposition is a corollary of Lemma  \ref{lem:graphe}, (ii).
\end{proof}

\subsection{Proportion of non pseudo-Anosov braids with infimum 0}

The aim of this section is to show that, among the rigid braids of some fixed infimum and canonical length~$l$, the proportion of non pseudo-Anosov braids tends to $0$ when $l$ tends to infinity. For this, we can use the following theorem, due to Gonz\'alez-Meneses and Wiest \cite{GMW} (Theorem 5.16):

\begin{theo}\label{theo:GMW}
Let $\beta$ be a non-periodic, reducible braid which is rigid. Then there is some positive integer $k \leqslant n$ such that one of the following conditions holds:
\begin{enumerate}[\rm (1)]
\item $\beta^k$ preserves a round curve, or
\item $\inf(\beta^k)$ and $\sup(\beta^k)$ are even, and either $\Delta^{-\inf(\beta^k)}\beta^k$ or $\beta^{-k}\Delta^{\sup(\beta^k)}$ is a positive braid which preserves an almost round curve whose corresponding interior strands do not cross.
\end{enumerate}
\end{theo}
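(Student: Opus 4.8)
The plan is to follow the strategy of \cite{GMW}: pass to a suitable power to reduce to a rigid braid fixing a single reduction curve, and then transfer the problem to positive rigid braids and the curves they preserve.

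\emph{Reduction to a power fixing one curve, and to its positive companions.} Since $\beta$ is non-periodic and reducible, its canonical reduction system $\mathcal R$ is a non-empty $\beta$-invariant multicurve, and $\beta$ permutes its finitely many components. Pick an outermost component $\mathcal C$; its $\beta$-orbit consists of pairwise disjoint, pairwise unnested curves, each enclosing at least two of the $n$ punctures, so the orbit has size at most $\lfloor n/2\rfloor$. Hence there is an integer $k\leqslant n$, which we may moreover take so that $\inf(\beta^k)$ and $\sup(\beta^k)$ are even (taking $k$ even suffices, since $\beta$ is rigid and so $\inf(\beta^k)=k\inf\beta$ and $\sup(\beta^k)=k\sup\beta$), with $\beta^k(\mathcal C)=\mathcal C$. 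Write $\gamma=\beta^k$; as $\beta$ is rigid, so is $\gamma$. Because $\inf\gamma$ and $\sup\gamma$ are even, $\Delta^{\inf\gamma}$ and $\Delta^{\sup\gamma}$ are powers of the central full twist $\Delta^2$, which is a boundary-parallel Dehn twist and hence acts trivially on isotopy classes of curves of $\disque n$. Therefore the two \emph{positive} braids $\gamma_-:=\Delta^{-\inf\gamma}\gamma$ and $\gamma_+:=\gamma^{-1}\Delta^{\sup\gamma}$ are again rigid (the evenness is exactly what lets the left-weightedness of $(\phi(\gamma),\iota(\gamma))$ survive), and both preserve $\mathcal C$, since $\gamma_-(\mathcal C)=\Delta^{-\inf\gamma}(\gamma(\mathcal C))=\mathcal C$ and $\gamma_+(\mathcal C)=\gamma^{-1}(\Delta^{\sup\gamma}(\mathcal C))=\gamma^{-1}(\mathcal C)=\mathcal C$.

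Thus the theorem reduces to the following statement about positive rigid braids: \emph{if $\delta$ is a positive rigid braid preserving a curve, then $\delta$ preserves a round curve, or else $\delta$ preserves an almost round curve whose interior strands do not cross.} Granting this, apply it to $\delta=\gamma_-$: in the first case the round curve is also $\gamma$-invariant (again because $\Delta^{\mathrm{even}}$ acts trivially on curves), giving conclusion (1); in the second case we get conclusion (2) with $\gamma_-$ as the positive braid. In the borderline situation where the curve produced for $\gamma_-$ is only almost round and has \emph{crossing} interior strands, one reruns the argument for the ``mirror'' companion $\gamma_+$, whose left normal form is built from the right complements of the factors of $\gamma$ read in reverse order, so that the offending interior braiding is undone and the non-crossing clause is recovered.

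\emph{The core claim.} Let $\delta=x_1\cdots x_r$ be positive and rigid with $\delta(\mathcal C)=\mathcal C$. Set $\mathcal C^{(i)}:=(x_1\cdots x_i)^{-1}(\mathcal C)$ for $0\leqslant i\leqslant r$; this family is $r$-periodic because $\delta^{-1}(\mathcal C)=\mathcal C$, it satisfies $x_i(\mathcal C^{(i)})=\mathcal C^{(i-1)}$, and each $\mathcal C^{(i)}$ is preserved by the cyclic rotation $x_{i+1}\cdots x_r x_1\cdots x_i$, which is still rigid. The key point is that this cyclic family must contain a round curve. I would prove this by a complexity argument: put $\mathcal C$ in minimal position with respect to the standard vertical arcs joining consecutive punctures, let $c(\cdot)$ be the total number of intersection points with these arcs (so $c(\mathcal D)=0$ if and only if $\mathcal D$ is round), and study how $c(\mathcal C^{(i-1)})=c(x_i(\mathcal C^{(i)}))$ compares with $c(\mathcal C^{(i)})$. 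Since $x_i$ is a \emph{simple} braid and the pair $(x_i,x_{i+1})$ is left-weighted — this is precisely where rigidity is used — one shows that $c$ cannot strictly decrease all the way around the cycle, and that any local configuration realizing equality either admits a further simplification through the left-weighting or already forces $c(\mathcal C^{(i)})=0$. Periodicity then yields $c(\mathcal C^{(i_0)})=0$ for some $i_0$, i.e.\ $\mathcal C^{(i_0)}$ is round; because a simple braid sends a round curve to a round or almost round curve, $\mathcal C^{(i_0-1)}$ is round or almost round, and a short further analysis of how roundness propagates along the cycle extracts a $\delta$-invariant curve that is round or almost round. The non-crossing clause is then obtained by writing this almost round curve as $s(\mathcal C'')$ with $\mathcal C''$ round and $s$ a simple braid of minimal length, and checking that a non-trivial sub-braid of $s$ on the strands enclosed by $\mathcal C''$ would contradict the left-weightedness of the normal form of $\delta$ (respectively of $\gamma_+$). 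The reductions of the first two paragraphs and the ``round $\mapsto$ almost round under a simple braid'' bookkeeping are routine; the real difficulty is this core claim — making the interaction between a left-weighted pair of simple braids and the way it can tangle a curve precise enough to eliminate all the equality cases in the complexity estimate — and this is also where the technically crucial non-crossing condition on the interior strands ultimately comes from.
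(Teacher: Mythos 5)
First, a point of context: the paper does not prove this statement at all --- it is quoted verbatim as Theorem~5.16 of Gonz\'alez-Meneses and Wiest \cite{GMW}, so there is no in-paper proof to compare yours against. What you have written is therefore an attempt to reconstruct the proof of a cited external theorem whose original proof occupies a substantial part of \cite{GMW}.

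Your preliminary reductions are sound and do match the general shape of the argument in \cite{GMW}: the orbit of an outermost component of the canonical reduction system has size at most $\lfloor n/2\rfloor$, so a suitable even $k\leqslant n$ gives a rigid power $\gamma=\beta^k$ fixing a curve with $\inf\gamma$, $\sup\gamma$ even; the two positive companions $\Delta^{-\inf\gamma}\gamma$ and $\gamma^{-1}\Delta^{\sup\gamma}$ are then rigid positive braids preserving the same curve. The genuine gap is the ``core claim''. As stated --- that the cyclic family $\mathcal C^{(i)}$ must contain a round curve --- it is both unproven and almost certainly false: the very presence of alternative (2) in the theorem is the acknowledgment that a rigid reducible braid need not have any power preserving a round curve, and that one must settle for an almost round invariant curve of one of the two positive companions. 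Your complexity argument does not close this: periodicity of $c(\mathcal C^{(i)})$ only tells you the total variation around the cycle is zero, not that the minimum is zero, and the entire difficulty lies in the equality cases, which you dispose of with the single sentence ``any local configuration realizing equality either admits a further simplification through the left-weighting or already forces $c(\mathcal C^{(i)})=0$''. That sentence is precisely the theorem. In \cite{GMW} this step requires the machinery of sliding circuits and a detailed analysis of how a simple factor of a left-weighted normal form can intersect a reduction curve, and it is exactly there that the dichotomy round/almost-round and the non-crossing condition on interior strands emerge; none of that analysis is actually carried out in your sketch. The final paragraph likewise asserts, rather than proves, that roundness ``propagates along the cycle'' to yield a $\delta$-invariant round or almost round curve, and that the mirror companion $\gamma^{-1}\Delta^{\sup\gamma}$ repairs the case of crossing interior strands. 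In short: the framing is right, but the mathematical content of the theorem is contained in the steps you have labelled as ``the real difficulty'' and left unproved.
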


Let us also state the following theorem of Bernadete, Gutierrez and Nitecki (Theorem~5.7 in \cite{BGN}) as given in \cite{Calvez} (Theorem 1):
\begin{prop}\label{prop:BGN}
Let $x \in \B_n$, seen as a mapping class in $\mcg(\disque n, \partial \disque n)$, with left normal form $x = \Delta^p x_1 \cdots x_r$. Let $\mathcal C$ be a round curve in $\disque n$. If $x(\mathcal C)$ is round, then $\Delta^p x_1\cdots x_m(\mathcal C)$ is round for all $m = 1, \ldots, r$.
\end{prop}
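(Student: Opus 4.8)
The plan is to reduce the assertion to a statement about dragging a round curve through the braid diagram written in left normal form, whose geometric core is the theorem of \cite{BGN}. \emph{Step~1: reduction to $p=0$.} The half-twist $\Delta$ acts on $\disque n$ as a rotation by $\pi$ (up to isotopy), hence it maps the set of round curves bijectively to itself, and therefore also maps non-round curves to non-round curves. Consequently, for every $m$ the curve $\Delta^p x_1\cdots x_m(\mathcal C)$ is round if and only if $x_1\cdots x_m(\mathcal C)$ is; in particular the hypothesis ``$x(\mathcal C)$ round'' is equivalent to ``$x_1\cdots x_r(\mathcal C)$ round''. Since $x_1\cdots x_r$ is exactly the left normal form of the positive braid $\Delta^{-p}x$, we may assume $p=0$, i.e. $x=x_1\cdots x_r\in\B_n^+$.

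\emph{Step~2: diagrammatic reformulation and a persistence statement.} Draw a positive braid diagram for $\beta:=x_1\cdots x_r$ with the canonical factors $x_1,\dots,x_r$ stacked in order and drag $\mathcal C$ through it; with the standard identification between composition of mapping classes and concatenation of diagrams, the curve recorded at the height separating $x_m$ from $x_{m+1}$ is $x_1\cdots x_m(\mathcal C)$. It equals $\mathcal C$ at the bottom (round) and $\beta(\mathcal C)$ at the top (round, by hypothesis). Thus the proposition is equivalent to: a round curve dragged through a positive braid in left normal form and round again at the exit is round at every canonical-factor cut. For this it suffices to establish the persistence statement: if the dragged curve fails to be round at some cut, then it fails to be round at every later cut, hence at the exit, contradicting the hypothesis.

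\emph{Step~3: the curve combinatorics (the main obstacle).} To prove persistence I would fix taut (bigon-free) representatives and track the complexity $\card{c\cap(\mathbb R\cap\disque n)}$, the minimal number of intersection points of $c$ with the real axis; it is always $\geqslant 2$, with equality exactly when $c$ is round. The goal is that this complexity is non-decreasing along the sequence of canonical-factor cuts, which gives persistence. The delicate point is that a single simple factor can \emph{in isolation} decrease the complexity — a suitably ``$\sigma_i$-twisted'' curve is sent by $\sigma_i$ back to a round one — so that what must be exploited is precisely the left-weighting condition $\init{x_{i+1}}\subset\final{x_i}$: this greediness is what forbids a canonical factor from geometrically undoing the effect of the factors preceding it. That the statement genuinely fails without left-weightedness is illustrated inside $\Delta$, where the prefix $\sigma_j$ already destroys roundness of some round curve although $\Delta$ itself restores it; there is no contradiction because $\sigma_j$ is not a normal-form prefix of $\Delta=\Delta^1$. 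Carrying this out is exactly \cite[Theorem~5.7]{BGN} (see also \cite{Calvez}): one organises the bookkeeping through the set of positions occupied by the interior strands (a purely permutation-theoretic layer, controlled by the starting and finishing sets) and the (un)wrapping of the corresponding tube (the genuinely delicate layer). The main obstacle is therefore the whole of Step~3, and within it the precise formalisation and verification of ``non-roundness, once created, cannot be undone within a left-weighted sequence''; one may either reproduce the argument of \cite{BGN} along these lines or simply invoke it.
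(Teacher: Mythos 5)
The paper does not prove this proposition at all: it is quoted verbatim as Theorem~5.7 of \cite{BGN}, in the form given as Theorem~1 of \cite{Calvez}. Your reduction to the positive case and the diagrammatic reformulation are correct, and since you ultimately defer the genuinely hard persistence step to that same reference, your proposal amounts to essentially the same approach as the paper.
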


\begin{notation}
In what follows we shall use the following two braids, written in normal form as follows: 
$$
\spa=\sigma_1\sigma_3\cdots\sigma_{2\lfloor \frac n 2 \rfloor -1} \ . \  \sigma_1\sigma_3\cdots\sigma_{2\lfloor \frac n 2 \rfloor -1}\sigma_2\sigma_4 \cdots \sigma_{2\lceil \frac n 2 \rceil - 2} \text{ \ \ (length 2)}$$
$$\spb=\Delta_{2,n} \sigma_1 \ . \ \sigma_1 \ . \ \sigma_1\sigma_2\cdots \sigma_{n-1} \ . \ \sigma_{n-1} \text{ \ \ (length 4)}
$$
(see Figures \ref{fig:rondes} and \ref{fig:semirondes}.)
\end{notation}

\begin{figure}[h]
\centering
\includegraphics{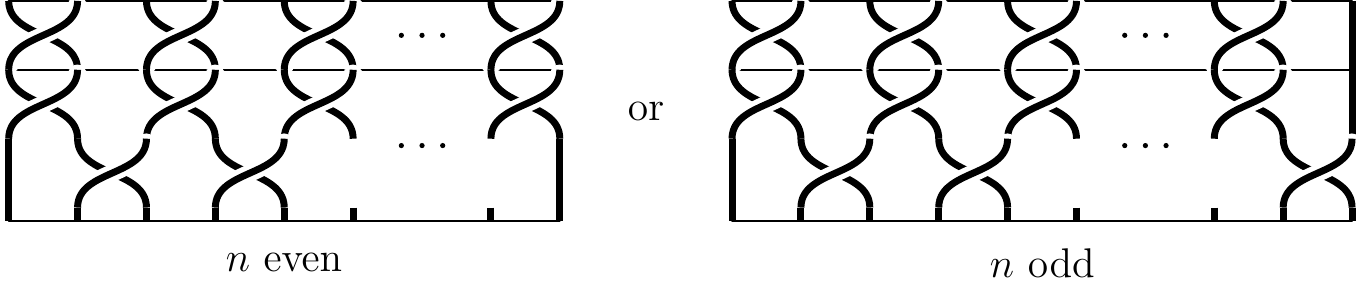}
\caption{A braid sending no round curve to a round curve}
\label{fig:rondes}
\end{figure}

\begin{figure}[h]
\centering
\includegraphics{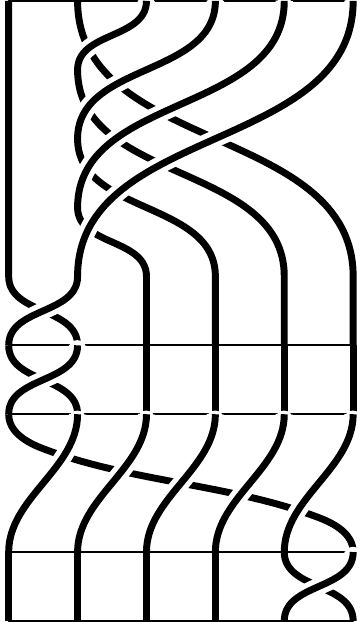}
\caption{A braid where each pair of strands crosses in some factor, and does not cross in some other factor}
\label{fig:semirondes}
\end{figure}

\begin{prop}\label{prop:propred}
%Among the rigid braids $\beta$ such that $\inf(\beta) = \epsilon$ (with $\epsilon\in\mathbb Z$ fixed) and such that the canonical length of $\beta$ is $l$, the proportion of periodic or reducible braids tends to $0$ exponentially quickly when $l$ tends to infinity.
A rigid braid whose normal form contains both $\spa$ and $\spb$ as subwords is pseudo-Anosov.
\end{prop}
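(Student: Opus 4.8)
The strategy is to exclude the two non-pseudo-Anosov Nielsen--Thurston types. Since the normal form of $\beta$ contains both $\spa$ (length $2$) and $\spb$ (length $4$) as subwords, its canonical length is at least $4$, in particular positive; and a rigid braid of positive canonical length cannot be periodic, because its powers satisfy $\sup(\beta^m)-\inf(\beta^m)=m\bigl(\sup\beta-\inf\beta\bigr)\to\infty$, so no $\beta^m$ equals a power of $\Delta$. It remains to rule out that $\beta$ is reducible and non-periodic; being also rigid, Theorem~\ref{theo:GMW} then provides an integer $k\leqslant n$ together with alternative~(1) or~(2), and the plan is to contradict each of them using the presence of $\spa$ and $\spb$.

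First, the Garside bookkeeping. As $\beta$ is rigid, $\beta^k$ is rigid and its left normal form is a concatenation of $k$ blocks, the last of which is exactly the normal form of $\beta$; hence the normal form of $\beta^k$ contains $\spa$ as two consecutive factors and $\spb$ as four consecutive factors, and the same holds for the positive braid $\beta':=\Delta^{-\inf(\beta^k)}\beta^k$. Moreover $\beta'':=\beta^{-k}\Delta^{\sup(\beta^k)}$ is positive, and its left normal form is obtained from that of $\beta'$ by reversing the order of the factors and replacing each factor by its right complement (up to conjugations by powers of $\Delta$); under this operation $\spb$ turns into an explicit braid $\spb^{\ast}$, and a short direct computation shows that $\spb^{\ast}$ still has the property, established for $\spb$ in Figure~\ref{fig:semirondes}, that every pair of its strands crosses in at least one factor (reversal and conjugation by $\Delta$ obviously preserve this property, so only the effect of complementation must be checked).

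Case~(1): $\beta^k$ preserves a round curve $\mathcal C$. Write the normal form of $\beta^k$ as $\Delta^q z_1\cdots z_N$. Since $\mathcal C$ and $\beta^k(\mathcal C)=\mathcal C$ are round, Proposition~\ref{prop:BGN} shows that every partial image $\Delta^q z_1\cdots z_m(\mathcal C)$ is round; as a power of $\Delta$ sends round curves to round curves, this produces a chain of round curves $\mathcal C=\mathcal C_0,\mathcal C_1,\dots,\mathcal C_N=\mathcal C$ in which $\mathcal C_m$ is obtained from $\mathcal C_{m-1}$ by applying the single simple braid $z_m$. Taking $z_jz_{j+1}$ to be the occurrence of $\spa$, we conclude that $\spa$ takes the round curve $\mathcal C_{j-1}$ to the round curve $\mathcal C_{j+1}$, contradicting the defining property of $\spa$ — that it sends no round curve to a round curve (Figure~\ref{fig:rondes}) — which is verified by tracking the image under $\spa$ of each of the finitely many combinatorial types of round curve.

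Case~(2): here $\inf(\beta^k)$ and $\sup(\beta^k)$ are even, so the ambient powers of $\Delta$ act trivially on curves and one of the positive braids $\beta'$, $\beta''$ preserves an almost round curve $\mathcal C$ whose interior strands do not cross; since $\mathcal C$ is non-degenerate there are at least two such strands, forming a pair that never crosses inside $\beta'$ (resp. $\beta''$). But the normal form of $\beta'$ (resp. $\beta''$) contains $\spb$ (resp. $\spb^{\ast}$) as four consecutive factors, and whenever $\spb$ or $\spb^{\ast}$ occurs as such a block inside a positive braid, \emph{every} pair of strands of the ambient braid crosses within that block: the ambient strand entering the block at position $i$ behaves inside it exactly like the $i$-th strand of $\spb$, and in $\spb$ (resp. $\spb^{\ast}$) every pair of strands crosses. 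In particular the two interior strands cross inside $\beta'$ (resp. $\beta''$), a contradiction. Hence $\beta$ is neither periodic nor reducible-non-periodic, so it is pseudo-Anosov. The only non-routine part of the whole argument consists of the two combinatorial verifications on the specific braids — that $\spa$ maps no round curve to a round curve, and that $\spb$ together with its dual $\spb^{\ast}$ forces all pairs of strands to cross — everything else being the standard Garside/BGN machinery together with the passage from $\beta$ to $\beta^k$, $\beta'$ and $\beta''$.
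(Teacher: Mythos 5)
Your proposal is correct and follows essentially the same route as the paper: rule out periodicity via rigidity, then use Theorem~\ref{theo:GMW} together with Proposition~\ref{prop:BGN} and the occurrence of $\spa$ to kill case (1), and the occurrence of $\spb$ to kill both alternatives of case (2). The only cosmetic difference is in the last alternative: you pass to the reversed right-complement $\spb^{\ast}$ inside the normal form of $\beta^{-k}\Delta^{\sup(\beta^k)}$ and check that every pair of its strands crosses, whereas the paper rewrites that braid as $\Delta^{-\inf(\beta^k)}\beta^k$ and uses the dual property of $\spb$ itself (every pair of strands also \emph{fails} to cross in some factor, as stated in Figure~\ref{fig:semirondes}) --- the two verifications are equivalent.
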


\begin{proof} Let us study a rigid braid $\beta$, denoting $\inf(\beta) = \epsilon$ and the canonical length of~$\beta$ as $l$.

First, we remark that there is no periodic rigid braid except $\Delta^\epsilon$. Indeed, if a braid~$\beta$ is rigid and has canonical length at least~$1$, then its left normal form is of the shape 
$$\beta=\Delta^\epsilon s_1 s_2 \cdots s_l$$ 
where $(s_i, s_{i+1})$ ($i = 1, \ldots, l-1$) and $(s_l,\tau^{-\epsilon}s_1)$ are left-weighted. Therefore, the normal form of a power of this braid is of the shape 
$$
\beta^k=\Delta^{k\epsilon} s_1^{(1)}s_2^{(1)}\cdots s_l^{(1)}s_1^{(2)}s_2^{(2)}\cdots s_l^{(2)}\cdots\cdots s_1^{(k)}s_2^{(k)}\cdots s_l^{(k)}
$$
where $s_i^{(j)}=\tau^{(k-j)\epsilon}(s_i)$, which is never a power of~$\Delta$ when $l\geqslant 1$.

Let us now deal with the possibility that $\beta$ might be reducible. 
%Let $\beta$ be of infimum~$\epsilon$, of canonical length~$l$, reducible non-periodic, and rigid. 
According to Theorem~\ref{theo:GMW}, there are three possible cases.

The first case correspond to the case (1) of the theorem. A power of $\beta$ preserves a round curve. The rigidity of $\beta$ implies that the normal form of a power of $\beta$ contains the normal form of~$\beta$ (except the initial factors~$\Delta$) as a subword. According to Proposition~\ref{prop:BGN}, we deduce that there exists a round curve whose image by $\beta$ is still a round curve.

The second case is the case where a $k$-th power of $\beta$ is such that $\Delta^{-\inf(\beta^k)}\beta^k=\Delta^{-k\epsilon}\beta^k$ preserves an almost round curve whose interior strands do not cross. If the normal form of $\beta$ is $\Delta^\epsilon s_1 s_2 \cdots s_l$, then, as before, $\Delta^{-k\epsilon}\beta^k$ has normal form
$$ 
\Delta^{-k\epsilon} \beta^k=s_1^{(1)}s_2^{(1)}\cdots s_l^{(1)}s_1^{(2)}s_2^{(2)}\cdots s_l^{(2)}\cdots\cdots s_1^{(k)}s_2^{(k)}\cdots s_l^{(k)}
$$
This word has two strands that never cross, and hence so does the word $s_1 s_2\cdots s_l$ representing $\Delta^{-\epsilon}\beta$.

Let us look at the third case. This time, it is the braid  $\beta^{-k}\Delta^{\sup(\beta^k)} = \beta^{-k}\Delta^{k(l+\epsilon)}$ which has two strands that do not cross. %and $k(l+\epsilon)$ is even - pas besoin?}
Note that this braid has infimum~$0$ and supremum~$k\cdot l$. Therefore in the braid 
$$\Delta^{k\cdot l}\cdot\left(\beta^{-k}\Delta^{k(l+\epsilon)}\right)^{-1}=\Delta^{-k\epsilon}\beta^k$$ 
(whose normal form was given in the previous paragraph) there are two strands which cross in every single factor.
%Thus in the complement of this braid, i.e.\ in $\Delta^{-k\epsilon}\beta^k$, there are two strands which cross in every single factor. 
Hence the same is true for the word $s_1 s_2\cdots s_l$ representing $\Delta^{-\epsilon}\beta$: it has two strands which cross in every factor.

Now, a braid~$\beta$ whose normal form contains $\spa$ cannot send any round curve to a round curve. The reason for this is that no round curve is sent to a round curve by this sequence of two simple braids (see Figure \ref{fig:rondes}), and according to Proposition~\ref{prop:BGN}, this is also the case for the whole braid~$\beta$. Similarly, a braid containing $\spb$ cannot %send a curve whose interior strands do not cross to a curve whose interior strands do not cross, because each pair of strands cross somewhere in this sequence of three simple braids 
contain two strands that do not cross at all, or that cross in every single factor (see Figure \ref{fig:semirondes}). This completes the proof.
\end{proof}

%Now, if the normal form of $\beta$ is $\beta = s_1 \cdots s_l$, those of $\beta^{-k}\Delta^{kl}$ can be written 
%\[\beta^{-k}\Delta^{kl} = s_l^{(k)} \cdots s_1^{(k)} s_l^{(k-1)} \cdots s_1^{(k-1)} \cdots s_l^{(1)} \cdots s_1^{(1)}\]
%where $s_i^{(j)}$ is equal to $\Delta s_i^{-1}$ if $i+(j-1)l$ is odd, and to $s_i^{-1}\Delta$ if $i+(j-1)l$ is even. (That is to say, the terms of the form $\Delta s_i^{-1}$ and $s_i^{-1}\Delta$ are alternated in the writing of the normal form.) So this condition implies that the braid $\partial\beta = \beta^{-1}\Delta^{\sup \beta}$ ($\partial\beta = (s_l^{-1}\Delta)(\Delta s_{l-1}^{-1}) \cdots (\Delta s_1^{-1})$ if $l$ even and $\partial\beta = (s_l^{-1}\Delta)(\Delta s_{l-1}^{-1}) \cdots (s_1^{-1}\Delta)$ if $l$ odd) has two strands that do not cross.

We now restrict our attention temporarily to the case of braids with infimum $0$.

\begin{lem}\label{lem:nbpAinf0}
The number of braids of infimum $0$ and supremum $l$, which are rigid and pseudo-Anosov, is a $\Theta(\g^l)$, where $\g$ is the constant of Lemma \ref{lem:graphe}.
\end{lem}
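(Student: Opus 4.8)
The plan is to translate the statement into a counting problem for the left-weighting graph $\graphe$ and then feed it into Lemma~\ref{lem:graphe} and Proposition~\ref{prop:propred}. The upper bound is free: a rigid pseudo-Anosov braid of infimum~$0$ and supremum~$l$ is in particular a rigid braid of infimum~$0$ and supremum~$l$, and by uniqueness of the left normal form these correspond to the loops (with a marked base vertex) in $\graphe$ read off from the successive canonical factors, of which there are $\Theta(\g^l)$ by Lemma~\ref{lem:graphe}. So everything is in the lower bound. By Proposition~\ref{prop:propred}, every rigid braid of infimum~$0$ and supremum~$l$ whose normal form contains both $\spa$ and $\spb$ as subwords is pseudo-Anosov; denoting by $N(l)$ the number of such braids, we have $N(l) \leqslant \#\{\text{rigid pseudo-Anosov braids of infimum }0\text{ and supremum }l\}$, so it is enough to prove $N(l) = \Theta(\g^l)$.

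I would obtain this by inclusion--exclusion inside the set $\mathcal R(l)$ of rigid braids of infimum~$0$ and supremum~$l$. Write $M(l) = \#\mathcal R(l) = \Theta(\g^l)$, and let $M_1(l)$, resp.\ $M_2(l)$, be the number of elements of $\mathcal R(l)$ whose normal form does \emph{not} contain $\spa$, resp.\ $\spb$, as a subword. Then
\[ N(l) \;\geqslant\; M(l) - M_1(l) - M_2(l). \]
The heart of the matter is the estimate $M_1(l) = O(\gp^l)$ and $M_2(l) = O(\gpp^l)$, where $\gp < \g$ and $\gpp < \g$ are the constants provided by Lemma~\ref{lem:graphe}(iii) applied to the paths $\spa$ (of length~$1$ in $\graphe$) and $\spb$ (of length~$3$). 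Granting these, $M_1(l) + M_2(l) \leqslant \frac{1}{2} M(l)$ for all large $l$ because $\gp, \gpp < \g$, whence $N(l) \geqslant \frac{1}{2} M(l) = \Theta(\g^l)$; together with $N(l) \leqslant \#\{\text{rigid pseudo-Anosov braids}\dots\} \leqslant M(l)$ this closes the argument.

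The one point that needs genuine care — and the step I expect to require the most thought — is the derivation of the bound on $M_1(l)$ (and identically $M_2(l)$), because for a rigid braid the condition ``the normal form $x_1\cdots x_l$ does not contain $\spa$'' forbids $\spa$ only at the pairs $(x_1,x_2),\dots,(x_{l-1},x_l)$, whereas rigidity turns $x_1\to\cdots\to x_l\to x_1$ into a loop, so one must also control the ``wrap-around'' pair $(x_l,x_1)$. I would split into two cases. If $\spa$ does not occur at $(x_l,x_1)$ either, the braid gives a loop of length~$l$ with marked base vertex in $\graphe$ not passing through $\spa$, and Lemma~\ref{lem:graphe}(iii) bounds the number of these by $O(\gp^l)$. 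If $\spa$ does occur at $(x_l,x_1)$, then writing $\spa = a_1.a_2$ with $a_1,a_2$ simple braids we get $x_l = a_1$ and $x_1 = a_2$, so that $(x_1,\dots,x_l)$ is a path of length $l-1$ in $\graphe$ (from $a_2$ to $a_1$) not passing through $\spa$, and there are at most $O(\gp^l)$ of those, again by Lemma~\ref{lem:graphe}(iii). Adding the two cases gives $M_1(l) = O(\gp^l)$, and the same reasoning with $\spb$ in place of $\spa$ gives $M_2(l) = O(\gpp^l)$, completing the proof.
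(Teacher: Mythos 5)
Your proposal is correct and follows essentially the same route as the paper: bound the total number of rigid braids of infimum $0$ and supremum $l$ by $\Theta(\g^l)$ via Lemma \ref{lem:graphe}, bound the sets avoiding $\spa$ or $\spb$ by $O(\gp^l)+O(\gpp^l)$ with $\gp,\gpp<\g$, and invoke Proposition \ref{prop:propred} for the remainder. Your extra case analysis of the wrap-around pair is harmless but not needed, since ``contains $\spa$ in the normal form'' refers only to the linear word $x_1\cdots x_l$, so the avoiding braids inject into the ordinary (non-cyclic) paths avoiding $\spa$ counted by $\nbw$.
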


\begin{proof}
Let us denote by $\Omega$ the set of rigid braids of infimum $0$ and supremum $l$. We also denote by $E_1 \subset \Omega$ the subset of the braids that do not contain, in their normal form, the normal form of $\spa$ as a subword. We denote by $E_2 \subset \Omega$ the subset of the braids that do not contain, in their normal form, the normal form of $\spb$ as a subword. 
%Finally, we denote by $E_3 \subset \Omega$ the set of braids $\beta$ such that the braid $\partial\beta$, defined as above, is in $E_2$.

According to Lemma~\ref{lem:graphe}, with the same notations, the cardinality $\card{\Omega}$ is equivalent to $\g^{l+1}$. 

Still from Lemma~\ref{lem:graphe}, we also have estimates $\card{E_1} =  O(\gp^{l})$ where $\gp < \g$ and $\card{E_2} = O(\gpp^{l})$ where $\gpp < \g$. 
Thus the cardinality of the set $E_1 \cup E_2$, whose cardinality is less than $c(\gp^{l}+\gpp^{l})$ for a suitable constant $c > 0$, and this set contains all rigid braids of infimum~$0$ and supremum~$l$ which are non pseudo-Anosov. 
%Consequently, the proportion of reducible non-periodic braids among the rigid braids of infimum~$\epsilon$ and canonical length~$l$ tends to $0$ exponentially fast.

%We remark that our exponentially decreasing bound on the proportion of non pseudo-Anosov braids in Proposition~\ref{prop:propred} depends on~$n$, but for fixed~$n$ it does not depend on~$\epsilon$. 

%Indeed, we have shown that the number of braids of infimum $0$ and supremum $l$, which are rigid and non pseudo-Anosov, is a $O(\gp^{l}+%2
%\gpp^{l})$. 
%From Lemma \ref{lem:graphe}, we know that the number of all braids of infimum $0$ and supremum $l$ which are rigid, is of the order of~$\g^l$. 

As $\gp < \g$ and $\gpp < \g$, the number of braids of infimum~$0$ and supremum~$l$ which are rigid and pseudo-Anosov, is still of the order of $\g^l$.
\end{proof}

\subsection{Arbitrary infimum}

Let us consider the Cayley graph of the braid group, with generators the simple braids. The following lemma, which is an immediate consequence of Lemma 3.1 in \cite{CM}, gives the possible left normal forms for a braid that is at distance $l$ from the neutral element in this graph.

\begin{lem}
Let $\beta$ be a braid at distance $l$ from the neutral element in the Cayley graph. Then the left normal form of $\beta$ has one of the following shapes:
\begin{enumerate}[{\rm (i)}]
\item $\beta = \Delta^{-l} s_1 \cdots s_k$, $k \in \{0, \ldots, l-1\}$,
\item $\beta = \Delta^{-k} s_1 \cdots s_l$, $k \in \{0, \ldots, l\}$,
\item $\beta = \Delta^k s_1 \cdots s_{l-k}$, $k \in \{1, \ldots, l\}$.
\end{enumerate}
\end{lem}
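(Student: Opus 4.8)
The plan is to deduce the statement from the description of the word metric on the Cayley graph: if a braid $\beta$ has left normal form $\Delta^p x_1 \cdots x_r$ --- so that $\inf\beta = p$, $\sup\beta = p + r$, and $\beta$ has canonical length $r$ --- then the distance $d(1,\beta)$ from the neutral element to $\beta$ equals $\max(\sup\beta, 0) - \min(\inf\beta, 0)$, equivalently $\max(p,0) + r + \max(-p-r, 0)$. This follows readily from Lemma~3.1 of~\cite{CM}; once it is granted, the present lemma is pure bookkeeping on the possible pairs $(\inf\beta, r)$, carried out below.

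Should one prefer to reprove the distance formula, I would argue as follows. For the lower bound $d(1,\beta) \geqslant \max(\sup\beta,0) - \min(\inf\beta,0)$, write $\beta = g_1 \cdots g_m$ with each $g_i$ a simple braid or the inverse of one, and let $a$ and $b$ count the two types, so that $m = a+b$. Since $0 \leqslant \inf s$ and $\sup s \leqslant 1$ for every simple $s$ (and the reverse inequalities for $s^{-1}$), the subadditivity of $\sup$ and the superadditivity of $\inf$ yield $\sup\beta \leqslant a$ and $\inf\beta \geqslant -b$, hence $\max(\sup\beta,0) - \min(\inf\beta,0) \leqslant a + b = m$. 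For the upper bound one exhibits words of the right length: if $\inf\beta \geqslant 0$ the normal form $\Delta^p x_1\cdots x_r$ itself uses $p + r = \sup\beta$ generators, and if $\sup\beta \leqslant 0$ one applies this to $\beta^{-1}$, whose infimum is $-\sup\beta \geqslant 0$. The only case requiring an idea is $\inf\beta < 0 < \sup\beta$, and this is the step I expect to be the main obstacle: setting $q = -\inf\beta$ (so $1 \leqslant q \leqslant r$), I would push one factor $\Delta^{-1}$ rightward through the word via $\Delta^{-1}y\Delta = \tau(y)$, where $\tau$ denotes conjugation by $\Delta$ (which permutes the simple braids), until it reaches the last factor, and absorb it there using the fact that $s\Delta^{-1}$ is again the inverse of a simple braid. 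This leaves $\Delta^{-(q-1)}$ times a product of $r-1$ simple braids, and an induction on $q$ then produces a word of length $r = \sup\beta - \inf\beta$, which is the claimed distance.

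With the formula in hand, suppose $d(1,\beta) = l$, put $p = \inf\beta$, and let $r$ denote the canonical length of $\beta$, so that $\max(p + r, 0) - \min(p, 0) = l$ with $r \geqslant 0$. If $p \geqslant 0$, then $p + r = l$, hence $r = l - p$; the sub-case $p = 0$ is $\beta = s_1 \cdots s_l$, form~(ii) with $k = 0$, while $p \geqslant 1$ forces $p \leqslant l$ and gives form~(iii) with $k = p$. If $p < 0$ and $p + r \geqslant 0$, then $\max(p+r,0) - p = l$ gives $r = l$, while $\sup\beta = p + l \geqslant 0$ forces $-l \leqslant p \leqslant -1$, which is form~(ii) with $k = -p \in \{1,\dots,l\}$. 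Finally, if $p < 0$ and $p + r < 0$, then $-p = l$, i.e. $p = -l$, while $0 \leqslant r$ and $p + r < 0$ give $r \in \{0,\dots,l-1\}$, which is form~(i) with $k = r$. Since a braid determines its infimum and canonical length through its normal form, exactly one of the three descriptions applies to $\beta$, and the list is complete. Apart from the geodesic-word construction in the mixed-sign case flagged above, every step is routine manipulation of the Garside invariants $\inf$ and $\sup$.
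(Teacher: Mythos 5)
Your proposal is correct and follows the same route as the paper, which simply invokes Lemma~3.1 of Charney--Meier (the word-length formula $d(1,\beta)=\max(\sup\beta,0)-\min(\inf\beta,0)$ for the generating set of simple braids and their inverses) and treats the case enumeration as immediate. Your extra sketch of that formula (sub/superadditivity of $\sup$ and $\inf$ for the lower bound, pushing $\Delta^{-1}$ through the normal form in the mixed-sign case for the upper bound) and the subsequent bookkeeping over the pairs $(\inf\beta, r)$ are both sound.
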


The following theorem is a generalization of the results previously obtained in the particular case of a zero infimum.

\begin{theo}
For large enough $l$, among all braids at distance $l$ from the neutral element in the Cayley graph, the proportion of rigid pseudo-Anosov braids is bounded below by a positive constant.
\end{theo}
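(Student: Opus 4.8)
The plan is to reduce the statement to the case of infimum~$0$, which was settled in Lemma~\ref{lem:nbpAinf0}, by using the classification of the left normal forms of distance-$l$ braids given by the previous lemma, together with the remark that left multiplication by the central braid~$\Delta^2$ changes neither the rigidity nor the Nielsen--Thurston type of a braid.

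First I would bound the total number of braids at distance~$l$ from above. For each fixed value of the infimum, the braids of canonical length~$r$ are in bijection with the paths of length~$r$ in~$\graphe$, hence there are $\Theta(\g^r)$ of them by Lemma~\ref{lem:graphe}(ii). Among all braids at distance~$l$: those of shape~(i), i.e.\ $\Delta^{-l}s_1\cdots s_k$ with $0\leqslant k\leqslant l-1$, are $\sum_{k=0}^{l-1}\Theta(\g^k)=O(\g^l)$ in number; those of shape~(iii), i.e.\ $\Delta^k s_1\cdots s_{l-k}$ with $1\leqslant k\leqslant l$, are $\sum_{k=1}^{l}\Theta(\g^{l-k})=O(\g^l)$ in number; and those of shape~(ii), i.e.\ $\Delta^{-k}s_1\cdots s_l$ with $0\leqslant k\leqslant l$, are $(l+1)\cdot\Theta(\g^l)=\Theta(l\g^l)$ in number, since for each of the $l+1$ admissible values of~$k$ the factors $s_1,\dots,s_l$ run over all normal forms of canonical length~$l$. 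Thus the number of braids at distance~$l$ is $\Theta(l\g^l)$, and in particular it is $O(l\g^l)$.

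Next I would exhibit a sufficiently large family of rigid pseudo-Anosov braids at distance~$l$, again inside shape~(ii) but with $k$ restricted to be \emph{even}. Fix an even integer~$k$ with $0\leqslant k\leqslant l$, and let $\beta_0=s_1\cdots s_l$ be the left normal form of a rigid pseudo-Anosov braid with $\inf\beta_0=0$ and $\sup\beta_0=l$. Then $\Delta^{-k}\beta_0$ has left normal form $\Delta^{-k}s_1\cdots s_l$, so its infimum is~$-k$ and its canonical length is~$l$; because $k$ is even, $\Delta^{-k}$ is central, whence $\iota(\Delta^{-k}\beta_0)=\Delta^{-k}s_1\Delta^{k}=s_1$ and $\phi(\Delta^{-k}\beta_0)=s_l$, so $\Delta^{-k}\beta_0$ is rigid (the pair $(s_l,s_1)$ is left-weighted, which is exactly the rigidity of~$\beta_0$). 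Since $\Delta^{-k}\beta_0$ and~$\beta_0$ have the same image in $\mcg(\disque n)$ they have the same Nielsen--Thurston type, so $\Delta^{-k}\beta_0$ is pseudo-Anosov; and having infimum $-k\leqslant 0$ and canonical length~$l$, it is of shape~(ii) in the previous lemma, hence at distance exactly~$l$ (this also follows from the distance formula of \cite{CM}). Letting~$\beta_0$ run over the $\Theta(\g^l)$ rigid pseudo-Anosov braids of infimum~$0$ and supremum~$l$ (Lemma~\ref{lem:nbpAinf0}), and~$k$ over the $\lfloor l/2\rfloor+1$ even integers in $\{0,\dots,l\}$, one gets pairwise distinct braids (distinct~$k$ give distinct infimum, distinct~$\beta_0$ give distinct normal form), all rigid, pseudo-Anosov and at distance~$l$. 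Hence the number of rigid pseudo-Anosov braids at distance~$l$ is at least $(\lfloor l/2\rfloor+1)\cdot\Theta(\g^l)=\Theta(l\g^l)$.

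Putting the two estimates together, the sought proportion is bounded below by a ratio of the form $\Theta(l\g^l)/O(l\g^l)$, hence by a strictly positive constant for all large~$l$. The point that requires care is that the numerator must also be of order $l\g^l$, so as to match the denominator: this is precisely why one must stay inside shape~(ii), where \emph{each} value of~$k$ contributes a full $\Theta(\g^l)$ batch of normal forms, rather than shapes~(i) or~(iii), for which the corresponding sums are geometric and only of order~$\g^l$. It is just as important that~$k$ be even, so that rigidity and the pseudo-Anosov property are inherited word for word from the infimum-$0$ case already established, with no additional geometric argument needed.
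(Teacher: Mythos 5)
Your proposal is correct and follows essentially the same route as the paper: bound the total number of distance-$l$ braids above by $O(l\g^l)$ using the three normal-form shapes, and bound below the count of rigid pseudo-Anosov braids by taking the $\Theta(\g^l)$ infimum-$0$ examples from Lemma~\ref{lem:nbpAinf0} and multiplying by central powers $\Delta^{-k}$ with $k$ even, giving $\Theta(l\g^l)$ of them. Your explicit verification that $\Delta^{-k}\beta_0$ is still at distance exactly $l$ (via the Charney--Meier distance formula) is a point the paper leaves implicit, but otherwise the arguments coincide.
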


\begin{proof}
First, let us make a remark: a braid $\beta$ is pseudo-Anosov if and only if $\Delta^2 \beta$ is pseudo-Anosov. The same is true when we replace ``pseudo-Anosov'' by ``rigid''. Thus, a braid with left normal form $\Delta^p s_1 \cdots s_r$ with $p$ even is pseudo-Anosov (respectively rigid) if and only if $s_1 \cdots s_r$ is.

According to Lemma \ref{lem:nbpAinf0}, there exists a constant $c_1 > 0$ such that for all large enough~$l$, the number of rigid pseudo-Anosov braids of the form $s_1 \cdots s_l$ is bounded below by $c_1 \g^l$. Consequently, the number of rigid pseudo-Anosov braids of the form $\Delta^{-k} s_1 \cdots s_l$ with $k \in \{0, \ldots, l\}$ and $k$ even is bounded below by $c_1 \frac l 2 \g^l$.

Furthermore, let us bound above the total number of braids at distance $l$ of the neutral element. According to Lemma \ref{lem:graphe}, there exists a constant $c_2$ such that the number of braids with normal form $s_1 \cdots s_k$ is bounded above by $c_2 \g^k$. So:
\begin{enumerate}[{\rm (i)}]
\item the number of braids with normal form $\Delta^{-l} s_1 \cdots s_k$ ($0 \leqslant k < l$) is bounded above by $c_2 (1 + \cdots + \g^{l-1})$,
\item the number of braids with normal form $\Delta^{-k} s_1 \cdots s_l$ ($0 \leqslant k \leqslant l$) is bounded above by $c_2 l \g^l$,
\item the number of braids with normal form $\Delta^k s_1 \cdots s_{l-k}$ ($0 < k \leqslant l$) is bounded above by $c_2 (1 + \cdots + \g^{l-1})$.
\end{enumerate}
As $c_2 (1+ \cdots + \g^{l-1}) \sim \frac{c_2}{\g-1} \g^l$, if we replace $c_2$ by an even larger constant, we can suppose that, in the cases (i) and (iii), the number of braids is bounded above by $\frac{c_2}{\g-1} \g^l$. Finally, the proportion of rigid pseudo-Anosov braids among all braids of length $l$ is bounded below by
\[\frac{c_1 \frac l 2 \g^l}{\frac{c_2}{\g-1}\g^l+c_2 l \g^l + \frac{c_2}{\g-1}\g^l}=\frac{c_1}{2c_2}\cdot\frac{1}{1+\frac 2{l(\g-1)}} \geqslant \frac{c_1}{2c_2}\cdot\frac{1}{1+\frac 2{\g-1}} > 0,\]
which completes the proof.
\end{proof}

\begin{cor}\label{cor:boule}
For large enough $l$, in the $l$-ball of the Cayley graph, the proportion of rigid pseudo-Anosov braids is bounded below independently of $l$ by a positive constant.
\end{cor}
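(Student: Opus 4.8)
The plan is to deduce the ball statement from the sphere statement established in the previous theorem, by a simple averaging over spheres. Write $S_k$ for the set of braids at distance exactly $k$ from the neutral element in the Cayley graph, so that the $l$-ball is the disjoint union $B_l = S_0 \sqcup S_1 \sqcup \cdots \sqcup S_l$. The previous theorem provides an integer $L_0$ and a constant $\varepsilon > 0$ such that, for every $k \geqslant L_0$, the number of rigid pseudo-Anosov braids in $S_k$ is at least $\varepsilon \cdot \card{S_k}$.

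The main step is then essentially one line. For $l \geqslant L_0$, summing the sphere estimates over $k$ from $L_0$ to $l$ and discarding the smaller spheres gives
\[
\card{\{\beta \in B_l : \beta \text{ rigid and pseudo-Anosov}\}} \;\geqslant\; \varepsilon \sum_{k=L_0}^{l} \card{S_k} \;=\; \varepsilon\,\bigl(\card{B_l} - \card{B_{L_0-1}}\bigr),
\]
hence the proportion of rigid pseudo-Anosov braids in $B_l$ is at least $\varepsilon\bigl(1 - \card{B_{L_0-1}}/\card{B_l}\bigr)$.

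To conclude I would observe that $\card{B_l}$ tends to infinity with $l$: the group $B_n$ is infinite and $B_l$ is non-decreasing in $l$, so $\card{B_l} \to \infty$ (alternatively, a product of $l$ simple braids lies in $B_l$, so $\card{B_l} \geqslant \nb(l) = \Theta(\g^l)$ by Lemma~\ref{lem:graphe}). Therefore $\card{B_{L_0-1}}/\card{B_l} \leqslant \tfrac 12$ for all sufficiently large $l$, and for those $l$ the proportion of rigid pseudo-Anosov braids in $B_l$ is bounded below by $\varepsilon/2 > 0$, independently of $l$.

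I do not anticipate a genuine obstacle here: all the substance is contained in the previous theorem, and the only mild subtlety is that the sphere estimate is available only for large radii, so the contribution of the finitely many small spheres — whose total size is bounded — must be absorbed, which is exactly what the (exponential) growth of $\card{B_l}$ permits. One could instead repeat the bookkeeping from the proof of the previous theorem, summing the upper bounds on $\card{S_k}$ and the lower bounds on the rigid pseudo-Anosov counts over $k \leqslant l$, but the averaging argument above is shorter and self-contained.
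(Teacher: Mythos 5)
Your argument is correct, but it takes a slightly different route from the paper. The paper's proof observes that the number of braids in the $k$-sphere is of the order of $k\g^k$, so that the $l$-ball has the same order of magnitude, $\Theta(l\g^l)$, as its outermost sphere alone; the constant proportion of rigid pseudo-Anosov braids in that single top sphere then already accounts for a constant proportion of the whole ball. You instead sum the sphere estimate over all radii $k$ from $L_0$ to $l$ and absorb the finitely many small spheres using only the fact that $\card{B_l}\to\infty$. Your version is more elementary and more robust: it does not rely on the exponential growth estimate for the spheres (only on the previous theorem plus the infiniteness of the group), and it would apply verbatim in any Cayley graph where the sphere proportions are eventually bounded below. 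The paper's version is shorter but implicitly uses the quantitative growth $\Theta(k\g^k)$ of the spheres, which it has available from the preceding proof. Both are complete proofs; no gap in yours.
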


\begin{proof}
The number of braids in the $k$-sphere is of the order of $k\g^k$, and the $l$-ball is the union of the $k$-spheres for $k \leqslant l$. We deduce that the number of braids in the $l$-ball is of the order of $l\g^l$, that is to say, of the order of the number of braids in the $l$-sphere. %Similarly, the number of pseudo-Anosov braids in the $l$-ball is of the order of the number of pseudo-Anosov braids in the $l$-sphere. 
So the proportion of rigid pseudo-Anosov braids remains of the order of a constant.
\end{proof}

\end{document}